\documentclass[12pt,twoside]{article}
\pagestyle{plain}

\setlength{\textwidth}{17cm} \setlength{\textheight}{23cm}
\setlength{\oddsidemargin}{-.5cm} \setlength{\evensidemargin}{-.5cm}
\setlength{\topmargin}{-.5cm} \setlength{\abovedisplayskip}{3mm}
\setlength{\belowdisplayskip}{3mm}
\setlength{\abovedisplayshortskip}{3mm}
\setlength{\belowdisplayshortskip}{3mm} \raggedbottom
\parskip=1mm
\parindent=5mm
\usepackage{amsmath,amsfonts,amssymb}

\usepackage{color}
\usepackage{ifpdf}
\usepackage{graphicx}
\usepackage{upgreek}
\usepackage{amsbsy}
\usepackage{amssymb}
\usepackage{amsfonts}
\usepackage{amsmath}
\usepackage{amsopn}
\usepackage{amsthm}
\usepackage[english]{babel}
\usepackage[applemac]{inputenc}

\newtheorem{theorem}{Theorem}

\newtheorem{definition}[theorem]{Definition}

\newtheorem{lemma}[theorem]{Lemma}

\newtheorem{proposition}[theorem]{Proposition}
\theoremstyle{definition}
\newtheorem{remark}[theorem]{Remark}
\newtheorem{example}[theorem]{Example}
\newtheorem{remarks}[theorem]{Remarks}
\newcommand{\sign}{\textrm{sgn}}
\hyphenation{pro-ba-bi-li-ty}

\title{On L\'evy processes conditioned to avoid zero}
\author{Henry Pant\'i \thanks{Facultad de Matem\'aticas, Universidad Aut\'onoma de Yucat\'an. Anillo Periférico Norte, Tablaje Cat. 13615, Colonia Chuburná Hidalgo Inn, Mérida Yucatán. E-mail: henry.panti@correo.uady.mx.} \thanks{Centro de Investigaci\'on en Matem\'aticas (CIMAT A.C.). Calle Jalisco s/n, 36240 Guanajuato, Guanajuato M\'exico. E-mail: henry@cimat.mx.}}
\date{\today}
\begin{document}
\maketitle

\begin{abstract}
The purpose of this paper is to construct the law of a L\'evy process conditioned to avoid zero, under mild technicals conditions, two of them being that the point zero is regular for itself and the Lévy process is not a compound Poisson process. Two constructions are proposed, the first lies on the method of $h$-transformation, which requires a deep study of the associated excessive function; while in the second it is obtained by conditioning the underlying L\'evy process to avoid zero up to an independent exponential time whose parameter tends to $0.$ The former approach generalizes some of the results obtained by Yano \cite{Yano10} in the symmetric case and recovers some of main results in Yano's work \cite{Yano13}, while the latter is reminiscent of \cite{Chaumont-Doney05}. We give some properties of the resulting process and we describe in some detail two examples: alpha stable and spectrally negative Lévy processes.  
\end{abstract}

\noindent\textbf{MSC}: 60G51 60G18 60J99\\
\noindent \textbf{Keywords}: L\'evy processes, excessive functions, excursion theory.


\section{Introduction}

The aim of this work is to construct L\'evy processes conditioned to avoid zero. This question is relevant only when $0$ is non-polar. Then the event ``not hitting zero'' has zero probability and hence a standard analytical approach consists on finding an adequate excessive function for the process killed at the first hitting time of zero and then use Doob's $h$-transformation technique. A good understanding of the associated excessive function allows us to establish analytical and pathwise  properties of the constructed process. This is the approach that has been used by Yano \cite{Yano10}, under the assumption that the L\'evy process is symmetric. The harmonic function obtained by Yano in \cite{Yano10} has been used to construct L\'evy processes conditioned to avoid zero in the symmetric case. Recently, Yano carried a similar study for asymmetric L\'evy processes \cite{Yano13}. The author in \cite{Yano13} gives some additional conditions in order to prove the existence of the harmonic function and he obtains an expression for it. Using a different approach, our work generalizes the results obtained in \cite{Yano10} and recovers those ones concerning to harmonic functions obtained in \cite{Yano13}. Moreover, we propose a probabilistic approach for constructing L\'evy processes conditioned to avoid zero, it relies on the idea that the construction can be performed by conditioning the process not to hit zero up to an independent exponential time with parameter $q$, and then make $q\to 0,$ so that the conditioning takes effect on the process all over the time interval $[0,\infty).$ This is a generic approach that has been used in several contexts. See for instance Chaumont and Doney~\cite{Chaumont-Doney05} and the reference therein, where the case of L\'evy processes conditioned to stay positive is investigated. We will prove that in our setting this procedure gives a non-degenerate limit and that this and the construction via Doob's $h$-transformation technique coincide.

The paper is organized as follows. In Section 2 the main results are stated. Some notations are introduced in section 2.1 and main results are stated in section 2.2. Section 3 is divided in two parts. The first part concerns the study of a sequence that defines as a limit the invariant function, some preliminary results and their proofs are given in this section. The second part is devoted to the study of an auxiliary needed function in most of the proofs in Section 4. Section 4 is devoted to prove the main theorems. In Section 5 two examples are studied where it is possible to compute explicitly the invariant function: the alpha-stable process and that of spectrally negative L\'evy process, i.e., processes with no positive jumps.

\section{Preliminaries and main results} \label{preliminaries}
\subsection{Notation} \label{notation}

Let $\mathcal{D}[0,\infty)$ be the space of c\`{a}dl\`{a}g paths $\omega: [0,\infty) \rightarrow \mathbb{R}\cup \{\Delta\}$ with lifetime $\zeta(\omega) = \inf\{s: \omega_s=\Delta\}$, where $\Delta$ is a cemetery point. The space $\mathcal{D}[0,\infty)$ is endowed with Skorohod's topology and its Borel $\sigma$-field, $\mathcal{F}.$  Moreover, let $\mathbb{P}$ be a reference probability measure on $\mathcal{D}[0,\infty),$ under which the coordinate process $X=(X_{t}, t\geq 0)$ is a L\'evy process. We will denote by $(\mathcal{F}_{t}, t\geq 0)$ the completed, right continuous filtration generated by $X$. As usual $\mathbb{P}_{x}$ denotes the law of $X+x,$ under $\mathbb{P},$ for $x\in\mathbb{R}.$ We have $\mathbb{P} = \mathbb{P}_0$ by definition. We will denote by $\theta$ the shift operator and by $k$ the killing operator, i.e., for $\omega \in \mathcal{D}[0,\infty)$, $\theta_t\omega(s) = \omega(s+t)$, $s\geq 0$, and
  $$
  k_t\omega(s) = \left\{
  \begin{tabular}{ll}
  $\omega(s)$, & $s<t$, \\
   & \\
  $\Delta$, & $s\geq t$.
  \end{tabular}
  \right.
  $$
For $t\geq 0$, we use $X\circ \theta_t$, $X\circ k_t$ to denote the functions in $\mathcal{D}[0,\infty)$ given by $\theta_t\omega(\cdot)$ and $k_t\omega(\cdot)$, $\omega\in\mathcal{D}[0,\infty)$, respectively. Throughout the paper $\psi:\mathbb{R}\to\mathbb{C}$ will denote the characteristic exponent of $(X,\mathbb{P})$, which is defined by
\begin{equation} \label{eqaux9-2}
\psi(\lambda) = -\frac{1}{t}\log(\mathbb{E}[e^{i\lambda X_t}]) = ia\lambda + \frac{\sigma^2}{2}\lambda^2 + \int_{\mathbb{R}}(1-e^{i\lambda x} + i\lambda x\mathbf{1}_{\{|x|<1\}})\pi(dx), \quad \lambda\in \mathbb{R},
\end{equation}
where $a\in \mathbb{R}$, $\sigma\geq 0$ and $\pi$ denotes the L\'evy measure, i.e., $\pi$ is a measure satisfying $\pi(\{0\})=0$ and $\int_{\mathbb{R}} (1\wedge x^2)\pi(dx)<\infty$. We denote by $P_t$ and $U_q$ the transition kernel at time $t$ and the $q$-resolvent of the process $(X,\mathbb{P})$. 

{\bf We assume throughout the paper that}
\begin{enumerate}
\item[\textbf{H.1}] The origin is regular for itself. 

\item[\textbf{H.2}] $(X,\mathbb{P})$ is not a compound Poisson process.

\item[\textbf{H.3}] The characteristic exponent $\psi$ satisfies
\begin{equation} \label{addcond}
\int_\mathbb{R} \left( \frac{1}{q + Re(\psi(\lambda))} \right) d\lambda <\infty, \quad q>0.
\end{equation}

\item[\textbf{H.4}] The following integrability condition holds
\begin{equation} \label{addcond2}
\int_\mathbb{R} \left|Re\left( \frac{1-e^{i\lambda}}{\psi(\lambda)} \right)\right| d\lambda <\infty.
\end{equation}
\end{enumerate}

We quote the following classical result that provides an equivalent way to verify the conditions \textbf{H.1} and \textbf{H.2} in terms of the characteristic exponent $\psi$.

\begin{theorem}[\cite{Bretagnolle71} and \cite{Kesten}] \label{theoKesten}
The conditions {\bf H.1} and {\bf H.2} are satisfied if and only if
\begin{equation*} 
\int_\mathbb{R} Re\left( \frac{1}{q+\psi(\lambda)} \right) d\lambda <\infty, \quad q>0
\end{equation*}
and
\begin{equation*} 
\textit{either} \quad \sigma\neq 0 \quad \textit{or} \quad \int_{|x|<1} |x|\pi(dx) = \infty.
\end{equation*}
\end{theorem}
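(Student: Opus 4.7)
My plan is to reduce the question to the existence of a bounded continuous density $u^q$ for the $q$-resolvent $U_q$ of $X$ that is positive at the origin, and then invoke classical potential theory for L\'evy processes.

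\emph{Fourier inversion.} The starting point is the identity $\widehat{u^q}(\lambda) = 1/(q+\psi(-\lambda))$ for the Fourier transform of the resolvent density. Since $|\mathbb{E}[e^{i\lambda X_t}]|\leq 1$ gives $\mathrm{Re}\,\psi\geq 0$, one has
\[
\mathrm{Re}\!\left(\frac{1}{q+\psi(\lambda)}\right) = \frac{q+\mathrm{Re}\,\psi(\lambda)}{|q+\psi(\lambda)|^2}\geq 0.
\]
Using the conjugation symmetry $\overline{\psi(-\lambda)}=\psi(\lambda)$ (equivalently, that $u^q$ is a real measure), the imaginary part integrates to $0$ in principal-value sense, and I would show that the stated condition is equivalent to $1/(q+\psi)\in L^1(\mathbb{R})$. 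Fourier inversion then produces a bounded continuous density $u^q$ with
\[
u^q(0)=\frac{1}{2\pi}\int_\mathbb{R}\mathrm{Re}\!\left(\frac{1}{q+\psi(\lambda)}\right) d\lambda \in (0,\infty).
\]

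\emph{Regularity and the compound Poisson alternative.} The existence of a continuous density $u^q$ positive at $0$ combined with the strong Markov property and Blumenthal's $0$-$1$ law yields $\mathbb{P}_0(T_{\{0\}}=0)=1$, i.e.\ H.1. The role of H.2 is in the converse: for a compound Poisson process, $U_q$ charges $\{0\}$ with an atom of mass $1/(q+\pi(\mathbb{R}))$, which destroys any continuous density and in particular prevents the Fourier integral from converging. This is the classical Kesten-Bretagnolle content.

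\emph{The variation condition.} The alternative ``$\sigma>0$ or $\int_{|x|<1}|x|\pi(dx)=\infty$'' is the standard characterization of unbounded variation paths. I would argue that H.1+H.2 forces this: on a bounded variation L\'evy process we have the representation $X_t = dt + \sum_{s\leq t}\Delta X_s$ with drift $d\in\mathbb{R}$. Shtatland's theorem gives $X_t/t\to d$ as $t\downarrow 0$, so if $d\neq 0$ the origin cannot be regular; if $d=0$ then either $\pi(\mathbb{R})<\infty$, making $X$ a compound Poisson process (ruled out by H.2), or $\pi(\mathbb{R})=\infty$, where a finer argument is required.

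\emph{Main obstacle.} The most delicate point is this last sub-case, namely ruling out the regularity of $0$ for a bounded variation L\'evy process with zero drift and infinite L\'evy measure but $\int_{|x|<1}|x|\pi(dx)<\infty$. Here the path decomposition does not immediately yield non-regularity, and one must invoke either a quantitative Fourier-analytic estimate showing that $\mathrm{Re}(1/(q+\psi))$ fails to be integrable at infinity in that regime, or a direct excursion-theoretic argument. Once this sub-case is settled, combining the three steps gives the equivalence in both directions.
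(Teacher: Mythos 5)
First, a point of comparison: the paper does not prove this statement at all --- it is quoted as a classical result of Kesten and Bretagnolle (cf.\ Theorem 19 of Chapter II in \cite{Bertoin} and its corollaries), so there is no internal proof to measure your outline against. Judged on its own merits, the outline contains two genuine gaps. The first is the claim that $\int_{\mathbb{R}} Re\left(1/(q+\psi(\lambda))\right) d\lambda<\infty$ is equivalent to $1/(q+\psi)\in L^1(\mathbb{R})$. Since $Re\left(1/(q+\psi)\right) = (q+Re\,\psi)/|q+\psi|^2 \le 1/|q+\psi|$, only one implication is immediate, and the converse is false: for $\psi(\lambda)=|\lambda|^{1/2}-ic\lambda$ with $c\neq 0$ (a symmetric $\tfrac12$-stable process plus a drift) the real part decays like $|\lambda|^{-3/2}$ and is integrable, while $1/|q+\psi|\sim 1/(c|\lambda|)$ is not. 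Producing a bounded continuous density $u^q$ from integrability of the real part alone is precisely the hard analytic content of Bretagnolle's argument --- this is why the paper's Remark~\ref{techremark} says ``proceeding as in the proof of Theorem 19, p.~65 in \cite{Bertoin}'' rather than appealing to naive Fourier inversion --- so this step cannot be dispatched as written.

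The second gap is the assertion that a continuous resolvent density positive at the origin, together with the strong Markov property and Blumenthal's zero--one law, already yields H.1. The same example refutes this: the $\tfrac12$-stable-plus-drift process satisfies the integral condition, hits points with positive probability, and admits a continuous density with $u^q(0)>0$, yet $X_t/t\to c\neq 0$ as $t\downarrow 0$ forces $T_{\{0\}}>0$ a.s., so $0$ is irregular for itself. The unbounded-variation alternative is therefore an input to the sufficiency direction, not merely (as your outline has it) a consequence of H.1 and H.2. Finally, the sub-case you flag as the ``main obstacle'' (bounded variation, zero drift, $\pi(\mathbb{R})=\infty$, $\int_{|x|<1}|x|\pi(dx)<\infty$) is settled in the classical theory by showing that points are then polar, equivalently that the integral condition fails there; this is again the analytic core of Kesten's theorem and is not supplied. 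In short, the proposal is a reasonable roadmap to the known proof, but the two steps above are incorrect as stated and the acknowledged obstacle is exactly where the real work lies.
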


It is known that under these hypotheses, for any $q>0$, there exists a density for the resolvent kernel that we will denote by $u_q(x,y)$:  
\begin{equation*}
U_qf(x) = \int_{\mathbb{R}} u_q(x,y)f(y)dy, \quad x\in\mathbb{R},
\end{equation*}
for all bounded Borel functions $f$. The density $u_q(x,y)$ equals $u_q(y-x)$, where $u_q$ is a continuous function. We refer to chapter II in \cite{Bertoin} for a proof of these results. Furthermore, from the resolvent equation
\begin{equation*}
U_q - U_r + (q-r)U_qU_r = 0, \quad q,r>0,
\end{equation*}
it can be deduced that the family of functions $(u_q,q>0)$ satisfies, for all $q,r>0$ with $q\neq r$, 
\begin{equation} \label{equ666}
\int_\mathbb{R} u_q(y-x)u_r(z-y) dy = \frac{1}{q-r}[u_r(z-x)-u_q(z-x)], \quad \textnormal{for all } z,x\in \mathbb{R}.
\end{equation}

For $x\in \mathbb{R}$, let $T_x$ be the first hitting time of $x$ for $X$:
  $$T_x = \inf\{t>0: X_t=x\},$$
with $\inf\{\emptyset\} = \infty$. The process killed at $T_0$, $X^0=X\circ k_{T_0}$, is given by
  $$X_t^0 = \left\{
  \begin{tabular}{ll}
  $X_t$, & $t<T_0$, \\
   & \\
  $\Delta$, & $t\geq T_0$.
  \end{tabular}
  \right.
  $$
For every $x\in \mathbb{R}$, we will denote by $\mathbb{P}_x^0$ the law of the killed process $X^0$ under $\mathbb{P}_x$. We use the notation $P_t^0$ and $U_q^0$ for its transition kernel and $q$-resolvent, respectively. From \cite[Corollary 18, p. 64]{Bertoin}, it is known that,
\begin{equation} \label{equ777}
\mathbb{E}_x[e^{-qT_0}] = \frac{u_q(-x)}{u_q(0)}, \quad q>0, x\in \mathbb{R}.
\end{equation}
Hence, with the help of the following well known identity:
\begin{equation*}
U_qf(x) = U_q^0f(x) + \mathbb{E}_x[e^{-qT_0}]U_qf(0),
\end{equation*}
for all bounded Borel functions $f$ and $q>0$, we obtain the resolvent density for $X^0$, namely,
\begin{equation} \label{equ888}
u_q^0(x,y) = u_q(y-x) - \frac{u_q(-x)u_q(y)}{u_q(0)}, \quad x,y\in\mathbb{R}.
\end{equation}

By $\widehat{\mathbb{P}}_x$ we will denote the law of the dual process $\widehat{X}:=-X$ under $\mathbb{P}_{-x}$, $x\in\mathbb{R}$. We will use the notation $\widehat{\quad}$ to specify the mathematical quantities related to the dual process $\widehat{X}$. For instance, $(\widehat{P}_t,t\geq 0)$, $(\widehat{U}_q, q>0)$ are the semigroup and the resolvent of the process $\widehat{X}$, respectively. It is known that the name ``dual'' comes from the following duality identity. Let $f$, $g$ be nonnegative and measurable functions. Then, for every $t\geq 0$
\begin{equation*}
\int_{\mathbb{R}} P_tf(x)g(x) dx = \int_{\mathbb{R}} f(x)\widehat{P}_t g(x) dx
\end{equation*}
and for every $q>0$
\begin{equation*} 
\int_{\mathbb{R}} U_qf(x) g(x)dx = \int_{\mathbb{R}} f(x) \widehat{U}_q g(x) dx.
\end{equation*}
For the semigroup and $q$-resolvent of the killed process we have as a consequence of \emph{Hunt's switching identity} (see e.g. \cite[p. 47, Theorem 5]{Bertoin}):
\begin{equation*}
\int_{\mathbb{R}} g(x) P_t^0f(x) dx = \int_{\mathbb{R}} f(x)\widehat{P}_t^0 g(x) dx
\end{equation*}
and for every $q>0$
\begin{equation*}
\int_{\mathbb{R}} g(x)U_q^0f(x) dx = \int_{\mathbb{R}} f(x) \widehat{U}_q^0 g(x) dx.
\end{equation*}

We observe that $(\widehat{X},\widehat{\mathbb{P}})$ satisfies also the hypotheses \textbf{H.1} and \textbf{H.2}. Thus, for any $q>0$, there exists a continuous density $\widehat{u}_q$ of the resolvent $\widehat{U}_q$. Furthermore, $u_q$ and $\widehat{u}_q$ are related by the equation: $\widehat{u}_q(x) = u_q(-x)$, $x\in \mathbb{R}$. Thereby, for any $q>0$, $\widehat{\mathbb{E}}_x[e^{-qT_0}]$ and the density of $\widehat{U}_q^{0}$ can be written in terms of $u_q$ as follows
\begin{equation} \label{equ999}
\widehat{\mathbb{E}}_x[e^{-qT_0}] = \frac{u_q(x)}{u_q(0)}, \quad q>0, x\in\mathbb{R}
\end{equation}
and 
\begin{equation*} 
\widehat{u}_q^0(x,y) = u_q(x-y) - \frac{u_q(x)u_q(-y)}{u_q(0)}, \quad x,y\in\mathbb{R}.
\end{equation*}

Since the point zero is regular for itself, there exists a continuous local time at 0 (in fact, at any point $x\in\mathbb{R}$). We denote by $L = (L_t,t\geq 0)$ the local time at zero, which is normalized by $\mathbb{E}(\int_0^\infty e^{-t}dL_t)=1$, and by $n$ the excursion measure away from zero for $X$. The measure $n$ is carried by the set of excursions away from zero:
\begin{equation*}
\mathcal{D}^0 = \left\{\upepsilon \in \mathcal{D}[0,\infty): \upepsilon(t)\neq 0, 0<t<\zeta(\upepsilon), 0<\zeta(\upepsilon) \leq \infty \right\}.
\end{equation*}
A nice relation between the excursion measure $n$ and the Laplace transform of the law of $T_0$ under $\widehat{\mathbb{P}}_x$ can be found in \cite[Theorem 3.3]{Yano-Yano-Yor09} for L\'evy processes and in \cite[eq. (3.22)]{Fitzsimmons-Getoor06}, \cite[eq. (2.8)]{Chen-Fukushima-Ying07} for general Markov processes. This is stated as follows, let $f$ be a nonnegative measurable function, then
\begin{equation} \label{equ1111}
\int_0^\infty e^{-qt}n(f(X_t), t<\zeta)dt = \int_{\mathbb{R}} f(x) \widehat{\mathbb{E}}_x[e^{-qT_0}]dx.
\end{equation}
In particular, if $f\equiv 1$, 
\begin{equation} \label{equ1212}
\int_0^\infty e^{-qt}n(\zeta>t)dt = \frac{1}{qu_q(0)}, \quad q>0.
\end{equation}

\subsection{Main results} \label{construction}

\textbf{Throughout the rest of this paper, and unless otherwise stated, we assume that \textbf{H.1}--\textbf{H.4} are satisfied.} Under these assumptions we have our first main result.

\begin{theorem} 
\label{propofh}  
For $q>0,$ let $h_{q}$ denote the function defined by 
\begin{equation}\label{eq:hqdef}
h_q(x) = u_q(0)-u_q(-x), \quad q>0, x\in\mathbb{R}.
\end{equation}
Then, the identity
\begin{equation} \label{eq:hqidentity}
h_q(x) = \frac{\mathbb{P}_x(T_0>\mathbf{e}_q)}{n(\zeta>\mathbf{e}_q)}, \quad x\in \mathbb{R}, 
\end{equation}
holds, where $\mathbf{e}_q$ is an exponential random variable with parameter $q>0$ independent of $(X, \mathbb{P})$.  The limit $\lim_{q\to 0}h_q(x)$ exists for all $x\in \mathbb{R}$ and the function $h:\mathbb{R}\to\mathbb{R}$ defined by 
\begin{equation}\label{eq:hdef} 
h(x)=\lim_{q\rightarrow 0} h_q(x), \quad x\in \mathbb{R}, 
\end{equation} is such that
\begin{enumerate}
\item[(i)] for every $x\in\mathbb{R}$, $0\leq h(x)<\infty$ and it holds
\begin{equation} \label{equ2121}
h(x) = \frac{1}{2\pi} \int_{-\infty}^\infty Re\left( \frac{1-e^{i\lambda x}}{\psi(\lambda)} \right) d\lambda, \quad x\in\mathbb{R}.
\end{equation}

\item[(ii)] $h$ is a subadditive, continuous function, which vanishes at the point $x=0$.

\item[(iii)] $h$ is invariant with respect to the semigroup of the L\'evy process killed at $T_0$, i.e.,
\begin{equation*}
P_t^0h(x) = h(x), \quad t>0, x\in \mathbb{R};
\end{equation*}
and furthermore
\begin{equation*}
n(h(X_t), t<\zeta) = 1, \quad \forall\, t>0.
\end{equation*}
\end{enumerate}
\end{theorem}

\begin{remarks}
\begin{enumerate}
\item[(i)] Under the assumptions \textbf{H.1}, \textbf{H.2} and $(X,\mathbb{P})$ is symmetric, Yano \cite{Yano10} showed that the function $h$ defined by  
\begin{equation} \label{equ111}
h(x) = \lim_{q\rightarrow 0} [u_q(0) - u_q(x)], \quad x\in \mathbb{R}
\end{equation}
is a well defined invariant function for the semigroup of the L\'evy process killed at its first hitting time of zero. In the same paper it is shown that the function $h$ can be expressed in terms of the characteristic exponent of $X$ as 
\begin{equation} \label{equ222}
h(x) = \frac{1}{2\pi} \int_{\mathbb{R}} \frac{1-\cos\lambda x}{\theta(\lambda)} d\lambda, \quad x\in\mathbb{R},
\end{equation}
where $\theta(\lambda)=Re(\psi(\lambda))$.  Theorem~\ref{propofh} extends (\ref{equ111}) and (\ref{equ222}).

\item[(ii)] In \cite{Yano13} the formula (\ref{equ2121}) is obtained under the conditions \textbf{H.3} and that the functions $\theta = Re(\psi)$, $\omega = Im(\psi)$ have measurable derivatives on $(0,\infty)$ satisfying
   $$\int_0^\infty \frac{ (| \theta'(\lambda) | + | \omega'(\lambda) |)(\lambda^2 \wedge 1) }{ \theta(\lambda)^2 + \omega(\lambda)^2 } d\lambda <\infty.$$ 
Instead of the latter condition, we consider the hypothesis \textbf{H.4} which seems less restrictive.
\end{enumerate}
\end{remarks}

The proof of (i) and (ii) in Theorem~\ref{propofh} will be given in section \ref{spofh}, as a consequence of analogous results for the sequence of functions $(h_q)_{q>0}$. In order to establish (iii) and other results, and due to technical issues, we will introduce an auxiliary function $h^\ast$. The function $h^\ast$ dominates $h$ and satisfies some integrability conditions. This function, as its name indicates, will help us to prove the main results, acting as a dominating function in an application of the dominated convergence theorem. The function $h^\ast$ is closely related to the local time of the L\'evy process $(X,\mathbb{P}),$  namely, we have the expression
\begin{equation*}
h^\ast(x) = \mathbb{E}( L_{T_x} ) = \lim_{q\rightarrow 0} \mathbb{E}\left( \int_0^{T_x} e^{-qt} dL_t \right), \quad x\in \mathbb{R}.
\end{equation*}
The function $h^\ast$ arises as a particular case of a general function $h(\cdot, \cdot)$ defined by
  $$h(x,y) = \mathbb{E}_x( L_{T_y}^x ) = \mathbb{E}_0(L_{T_{y-x}}^0) = h(0,y-x) = h^\ast(y-x),$$
where $L_t^x$ denotes the local time at the point $x$ for the process $(X,\mathbb{P}_x)$. The function $h(\cdot, \cdot)$ has been used to establish continuity criteria for local times of L\'evy processes, see \cite{Barlow85, Barlow88} for this case and \cite{Eisenbaum-Kaspi07} for general Borel right Markov processes.

Besides, in the present context, both Yano's and our results can be seen as an extension of the theory of invariant functions for killed L\'evy processes that can be found in Section 23 of the treatise by Port and Stone \cite{Port-Stone71} on the potential theory for L\'evy processes in locally compact, non-compact, second countable Abelian groups. Detailing the relation with that paper would require us to introduce further notations and facts that will not be used later, so we do not provide further details.

Having constructed the invariant function $h,$ in the following definition we introduce the associated $h$-process. We will show that the resulting probability measures are such that the canonical process X never hits the point zero, and thus that we refer to them as the law of the L\'evy process conditioned to avoid zero. Theorem \ref{mainresult} below summarises these properties. Let $\mathcal{H}$ and $\mathcal{H}_0$ be the sets given by 
  $$\mathcal{H} = \{x\in \mathbb{R}: h(x)>0\}, \quad \mathcal{H}_0 = \mathcal{H}\cup\{0\}.$$ 
The law of the L\'evy process conditioned to avoid zero will be constructed on the set $\mathcal{H}_0$.

\begin{definition}
We denote by $(\mathbb{P}_x^\updownarrow, x\in \mathcal{H}_0)$ the $h$-transform of $(\mathbb{P}_x^0, x\in \mathcal{H}_0)$ associated to the invariant function $h$ defined in Theorem~\ref{propofh} (iii). That is, $(\mathbb{P}_x^\updownarrow, x\in \mathcal{H}_0)$ is the unique family of measures such that for $x\in\mathcal{H}_0$, 
  $$\mathbb{P}_x^\updownarrow(\Lambda) = \left\{ 
  \begin{tabular}{cc}
  $\displaystyle{\frac{1}{h(x)} \mathbb{E}_x^0(\mathbf{1}_{\Lambda}h(X_t))}$, & $\quad x\in \mathcal{H}$, \\
   & \\
  $n(\mathbf{1}_\Lambda h(X_t) \mathbf{1}_{\{ t<\zeta\}})$, & $x=0$,
  \end{tabular} 
  \right.
  $$
for all $\Lambda \in \mathcal{F}_t$, for all $t\geq 0$. We will refer to it as the law of $X$ conditioned to avoid $0$.
\end{definition}

\begin{remark}
Note that from this definition and Theorem \ref{propofh}, $\mathbb{P}_x^\updownarrow(T_0>t) = 1$, for all $t>0$, $x\in\mathcal{H}_0$. Hence, $\mathbb{P}_x^\updownarrow(T_0=\infty)=1$, for all $x\in\mathcal{H}_0$. 
\end{remark}

\begin{theorem} \label{mainresult}
The family of measures $(\mathbb{P}_x^\updownarrow)_{x\in \mathbb{R}}$ is Markovian and satisfies 
\begin{enumerate}
\item[(i)] $\mathbb{P}_x^\updownarrow(X_0=x)=1$, $\forall\, x\in\mathcal{H}_0$.

\item[(ii)] $\mathbb{P}_x^\updownarrow(T_0=\infty)=1$, $\forall\, x\in\mathcal{H}_0$.
\end{enumerate}
The semigroup associated to $(\mathbb{P}_x^\updownarrow)_{x\in\mathbb{R}}$ is given by
\begin{equation*}
P_t^\updownarrow(x, dy) := \frac{h(y)}{h(x)}P_t^0(x,dy), \quad x,y\in \mathcal{H}, \quad t\geq 0.
\end{equation*}
The entrance law under $\mathbb{P}_0^\updownarrow$ is given by
\begin{equation*}
\mathbb{P}_0^\updownarrow(X_t\in dy) = n(X_t\in dy, t<\zeta)h(y).
\end{equation*}
\end{theorem}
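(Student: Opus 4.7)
The plan is to verify each assertion by leaning on the invariance property of $h$ established in Theorem~\ref{propertiesofh}(iii), together with the Markov property of $(X^{0},\mathbb{P}_{x}^{0})$ for $x\neq 0$ and the classical Markov property of the excursion measure $n$ for the entry at zero. Before anything else, I would check that $\mathbb{P}_{x}^{\updownarrow}$ is a well defined probability measure: for $\Lambda\in\mathcal{F}_{s}$ and $t\geq s$, the Markov property under $\mathbb{P}_{x}^{0}$ gives
\[
\mathbb{E}_{x}^{0}\bigl(\mathbf{1}_{\Lambda} h(X_{t})\bigr)
=\mathbb{E}_{x}^{0}\bigl(\mathbf{1}_{\Lambda}P^{0}_{t-s}h(X_{s})\bigr)
=\mathbb{E}_{x}^{0}\bigl(\mathbf{1}_{\Lambda}h(X_{s})\bigr),
\]
so the definition is independent of the representative time $t$, and the invariance $P_{t}^{0}h=h$ (resp. $n(h(X_{t}),t<\zeta)=1$) shows that the total mass of $\mathbb{P}_{x}^{\updownarrow}$ equals one.

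Next I would dispose of (i) and (ii). For $x\neq 0$, taking $\Lambda=\{X_{0}=x\}\in\mathcal{F}_{0}$ and $t=0$ in the definition gives $\mathbb{P}_{x}^{\updownarrow}(X_{0}=x)=h(x)/h(x)=1$; for $x=0$ one invokes the basic structure of the excursion measure, since every excursion in $\mathcal{D}^{0}$ starts at zero. For (ii), use the convention $h(\Delta)=0$: since $\{T_{0}>t\}=\{X_{t}\neq\Delta\}$ under the killed dynamics, we have $\mathbf{1}_{\{T_{0}>t\}}h(X_{t})=h(X_{t})$, so
\[
\mathbb{P}_{x}^{\updownarrow}(T_{0}>t)=\frac{1}{h(x)}\mathbb{E}_{x}^{0}\bigl(h(X_{t})\bigr)=1,
\]
and letting $t\to\infty$ gives $\mathbb{P}_{x}^{\updownarrow}(T_{0}=\infty)=1$; the case $x=0$ is identical via $n$.

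The semigroup and entrance law formulas are then immediate transcriptions of the definition with $\Lambda=\{X_{t}\in A\}$: for $x\neq 0$,
\[
\mathbb{P}_{x}^{\updownarrow}(X_{t}\in A)=\frac{1}{h(x)}\int_{A}h(y)P_{t}^{0}(x,dy),
\]
and for $x=0$, $\mathbb{P}_{0}^{\updownarrow}(X_{t}\in A)=n\bigl(h(X_{t})\mathbf{1}_{\{X_{t}\in A\}}\mathbf{1}_{\{t<\zeta\}}\bigr)$. Finally, to establish the Markov property, for $\Lambda\in\mathcal{F}_{s}$ and a bounded $\mathcal{F}_{t}$-measurable $F$, I would use the definition at time $s+t$ together with the Markov property of $X^{0}$ at time $s$:
\[
\mathbb{E}_{x}^{\updownarrow}\bigl(\mathbf{1}_{\Lambda}\, F\circ\theta_{s}\bigr)
=\frac{1}{h(x)}\mathbb{E}_{x}^{0}\bigl(\mathbf{1}_{\Lambda}\,\mathbb{E}_{X_{s}}^{0}[h(X_{t})F]\bigr)
=\mathbb{E}_{x}^{\updownarrow}\bigl(\mathbf{1}_{\Lambda}\,\mathbb{E}_{X_{s}}^{\updownarrow}(F)\bigr),
\]
and the case $x=0$ is handled analogously using the Markov property of excursions at strictly positive times. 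The main obstacle is the $x=0$ case, where one must carefully appeal to the fact that, conditionally on $X_{s}=y\neq 0$ (with $s<\zeta$), the post-$s$ excursion evolves as $X^{0}$ under $\mathbb{P}_{y}^{0}$; this transfers the previous computation from $\mathbb{P}_{x}^{0}$ to $n$ and closes the argument.
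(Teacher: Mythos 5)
Your overall route is the same as the paper's: consistency and total mass via the invariance $P_t^0h=h$ and $n(h(X_t),t<\zeta)=1$, the standard $h$-transform computation for the Markov property when $x\neq 0$, and the Markov property of the ($\sigma$-finite) excursion measure $n$ with semigroup $(P_t^0)$ for the entrance at $0$; parts (ii), the semigroup identity and the entrance law are indeed immediate transcriptions of the definition, exactly as in the paper.

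There is, however, a genuine gap at the one point the paper singles out as needing proof, namely $\mathbb{P}_0^\updownarrow(X_0=0)=1$. You dismiss it by saying that ``every excursion in $\mathcal{D}^0$ starts at zero,'' but this is not part of the definition of $\mathcal{D}^0$ given in the paper ($\mathcal{D}^0$ only requires $\upepsilon(t)\neq 0$ for $0<t<\zeta$, and says nothing about $\upepsilon(0)$), and for a general non-symmetric L\'evy process with jumps it is a nontrivial property of $n$ that excursions leave $0$ continuously rather than by a jump. Note also that you cannot simply plug $t=0$ into the definition when $x=0$, since $h(X_0)=h(0)=0$ would give total mass $0$; one must work at a time $t>0$ and control the behaviour of $X_\epsilon$ as $\epsilon\downarrow 0$ under the probability $\mathcal{Q}_t(\cdot)=n(\,\cdot\,h(X_t),t<\zeta)$. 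This is what the paper does: it shows $\mathbb{P}_0^\updownarrow(|X_\epsilon|<z)\to 1$ as $\epsilon\to 0$ for every $z>0$, using precisely the fact that excursions of $(X,\mathbb{P})$ leave $0$ continuously (which under \textbf{H.1}--\textbf{H.2} follows, e.g., from the absolute continuity of the resolvent, so that the Lebesgue time spent at $0$ vanishes and the compensation formula rules out jumps from $0$), together with the right-continuity of paths and dominated convergence. Your argument would be complete if you stated and justified this property of $n$ instead of treating it as definitional; as written, the step is asserted rather than proved.
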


We propose an alternative construction of the law of the L\'evy process conditioned to avoid zero. Our construction is inspired from  \cite{Bertoin93, Chaumont96, Chaumont-Doney05, Chaumont-Doney08}, where L\'evy processes conditioned to stay positive are constructed. This construction is given in Theorems \ref{condavoidzero} and \ref{theo5} below.

The following theorem states that for $x\in \mathcal{H}$, $\mathbb{P}_x^\updownarrow$ is the limit, as $q\rightarrow 0$, of the law of the process $X$ under $\mathbb{P}_x$ conditioned to avoid zero, up to an independent exponential time with parameter $q>0$. Since an exponential random variable with parameter $q$ converges in distribution to infinity as its parameter converges to zero, then this result confirms that, starting at $x\in \mathcal{H}$, we can think of $X$ under $\mathbb{P}_x^\updownarrow$, as the process conditioned to avoid zero on the whole positive real line.

\begin{theorem} \label{condavoidzero}  
Let $\mathbf{e}_q$ be an exponential time with parameter $q>0$, independent of $(X, \mathbb{P})$. Then for any $x\in \mathcal{H}$, and $t>0$,
\begin{equation*}
\lim_{q\rightarrow 0} \mathbb{P}_x(\Lambda, t<\mathbf{e}_q \mid T_0>\mathbf{e}_q) = \mathbb{P}_x^\updownarrow(\Lambda), \quad \forall\, \Lambda\in \mathcal{F}_t.
\end{equation*}
\end{theorem}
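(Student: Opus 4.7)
The plan is to rewrite both the numerator and the denominator of the conditional probability in closed form using the strong Markov property at $T$ together with the memoryless property of $\mathbf{e}_q$, and then pass to the limit $q\downarrow 0$ inside the resulting expectation, identifying the limit via the definition of $\mathbb{P}_x^\updownarrow$.

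First I would observe that on $\{T<\mathbf{e}_q\}\cap\{T_0>\mathbf{e}_q\}$ one automatically has $T<T_0$. Using the independence of $\mathbf{e}_q$ from $X$ and the strong Markov property at $T$, followed by Fubini, one obtains
\begin{equation*}
\mathbb{P}_x(\Lambda,\,T<\mathbf{e}_q,\,T_0>\mathbf{e}_q)=\mathbb{E}_x\!\left[\mathbf{1}_\Lambda\,\mathbf{1}_{\{T<T_0\}}\,e^{-qT}\,\mathbb{P}_{X_T}(T_0>\mathbf{e}_q)\right].
\end{equation*}
By \eqref{eq777}, $\mathbb{P}_y(T_0>\mathbf{e}_q)=1-\mathbb{E}_y[e^{-qT_0}]=h_q(y)/u_q(0)$, and specialising to $T=0,\Lambda=\Omega$ gives $\mathbb{P}_x(T_0>\mathbf{e}_q)=h_q(x)/u_q(0)$. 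Dividing produces the key identity
\begin{equation*}
\mathbb{P}_x(\Lambda,\,T<\mathbf{e}_q\mid T_0>\mathbf{e}_q)=\frac{1}{h_q(x)}\,\mathbb{E}_x\!\left[\mathbf{1}_\Lambda\,\mathbf{1}_{\{T<T_0\}}\,e^{-qT}\,h_q(X_T)\right],
\end{equation*}
in which the $q$-dependence is now transparent.

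Next I would pass to the limit $q\downarrow 0$ term by term. The prefactor converges to $1/h(x)\in(0,\infty)$ since $x\neq 0$ and by Theorem~\ref{propertiesofh}(ii). Inside the expectation, $e^{-qT}\uparrow 1$ and $h_q(X_T)\to h(X_T)$ pointwise on $\{T<\infty\}$ by \eqref{eq:hhdef}. To exchange limit and integral I would invoke dominated convergence with the auxiliary function $h^\ast$ introduced after Theorem~\ref{propertiesofh}: the bound $h_q\leq C\,h^\ast$ uniformly in $q$, together with the finiteness of $\mathbb{E}_x[\mathbf{1}_{\{T<T_0\}}h^\ast(X_T)]$, is exactly the type of estimate used to establish the invariance statement in Theorem~\ref{propertiesofh}(iii). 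Once this is granted, the conditional probability converges to
\begin{equation*}
\frac{1}{h(x)}\,\mathbb{E}_x\!\left[\mathbf{1}_\Lambda\,\mathbf{1}_{\{T<T_0\}}\,h(X_T)\right].
\end{equation*}

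Finally I would identify this limit with $\mathbb{P}_x^\updownarrow(\Lambda)$. The invariance relation \eqref{invariancetheorem} says that $M_t:=h(X_t)\mathbf{1}_{\{t<T_0\}}$ is a nonnegative $(\mathcal{F}_t)$-martingale under $\mathbb{P}_x$, and hence the Doob $h$-transform defining $\mathbb{P}_x^\updownarrow$ on $\mathcal{F}_t$ extends by optional sampling to any stopping time $T$ (with the standard convention $\Lambda\subseteq\{T<\infty\}$ for $\Lambda\in\mathcal{F}_T$), giving precisely the displayed expression. The main obstacle in carrying out this plan is the dominated convergence step: one must produce a $q$-uniform majorant for $(h_q)_{q>0}$ and verify the integrability $\mathbb{E}_x[\mathbf{1}_{\{T<T_0\}}h^\ast(X_T)]<\infty$ for an arbitrary stopping time $T$. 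This is where the potential-theoretic properties of $h^\ast$ and its representation as $\mathbb{E}(L_{T_\cdot})$ are decisive, and this part of the argument will have to lean on the estimates developed in Section~\ref{spofh}.
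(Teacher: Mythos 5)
Your proposal is correct and follows essentially the same route as the paper: the identical strong Markov/memorylessness computation yields the paper's key identity (\ref{eq3636}) (your $e^{-qT}$ is just $\mathbf{1}_{\{T<\mathbf{e}_q\}}$ with $\mathbf{e}_q$ integrated out), and the limit passage rests on the same domination $h_q\leq h^\ast$ and $\mathbb{E}_x[h^\ast(X_T);\,T<T_0]\leq s(x)$ supplied by Remark~\ref{remarkofh} and the excessivity of $s$. The only difference is that you make explicit the optional-sampling identification of the limit with $\mathbb{P}_x^\updownarrow(\Lambda)$ for a general stopping time, which the paper leaves implicit.
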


In the case $x=0$, the law $\mathbb{P}_0^\updownarrow$ can also be obtained as a limit involving an independent exponential time. Before stating the result, we point out that for $s>0$, we will denote by $g_s = \sup\{ t\leq s: X_t=0\}$, the last zero of $X$ before time s.

\begin{theorem} \label{theo5}
Let $\mathbf{e}_q$ be an exponential time with parameter $q>0$, independent of $(X, \mathbb{P})$. Let $\mathbb{P}^{\mathbf{e}_q}$ be the law of $X\circ k_{\mathbf{e}_q-g_{\mathbf{e}_q}}\circ \theta_{g_{\mathbf{e}_q}}$ under $\mathbb{P}$. Then, for $t>0$,
\begin{equation*}
\lim_{q\rightarrow 0}\mathbb{P}^{\mathbf{e}_q} (\Lambda, t<\zeta) = \mathbb{P}_0^\updownarrow(\Lambda) = n(\mathbf{1}_\Lambda h(X_t) \mathbf{1}_{\{t<\zeta\}}), \quad \forall\, \Lambda\in \mathcal{F}_t.
\end{equation*}
\end{theorem}

Another important property of the $h$-process is its transiency. This is given in the following proposition.

\begin{proposition}[Transiency property] \label{transitorityofhprocess}
The process $(X, \mathbb{P}_x^\updownarrow)_{x\in\mathcal{H}_0}$ is transient.
\end{proposition}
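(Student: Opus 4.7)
My plan is to show that the $0$-potential density of $(X,\mathbb P^{\updownarrow}_{x})_{x\in\mathbb R}$ is finite and locally bounded, and then apply the classical criterion that a right Markov process whose expected occupation of every compact set is finite must be transient. The key step is to obtain a tractable limit of the killed resolvent density $u_q^{0}(x,y)$ as $q\downarrow 0$, even in the recurrent case where $u_q(0)$ may diverge.

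First I would rewrite \eqref{eq888} by substituting $u_q(-x)=u_q(0)-h_q(x)$, $u_q(y)=u_q(0)-\hat h_q(y)$ and $u_q(y-x)=u_q(0)-h_q(x-y)$, the middle identity following from \eqref{eq:hqdef} applied to the dual $\widehat X$ (which also satisfies \textbf{H.1}--\textbf{H.2}). A direct expansion gives
\begin{equation*}
u_q^{0}(x,y)=h_q(x)+\hat h_q(y)-h_q(x-y)-\frac{h_q(x)\hat h_q(y)}{u_q(0)}.
\end{equation*}
By Theorem \ref{propertiesofh} (applied to $X$ and to $\widehat X$), the functions $h_q,\hat h_q$ converge pointwise as $q\downarrow 0$ to finite, nonnegative, continuous limits $h$ and $\hat h$, while $u_q(0)=\int_0^{\infty}e^{-qt}p_t(0)\,dt$ is decreasing in $q$ and tends either to a finite limit $U(0)$ (transient case) or to $+\infty$ (recurrent case). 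In either case the ratio $h_q(x)\hat h_q(y)/u_q(0)$ has a finite nonnegative limit, so
\begin{equation*}
u_0^{0}(x,y):=\lim_{q\downarrow 0}u_q^{0}(x,y)
\end{equation*}
exists, is finite for every $x,y\in\mathbb R$, and is jointly continuous.

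Since $(P_t^{\updownarrow})$ is the $h$-transform of $(P_t^{0})$, its $q$-resolvent admits the density $u_q^{\updownarrow}(x,y)=\frac{h(y)}{h(x)}u_q^{0}(x,y)$ for $x\neq 0$, and monotone convergence as $q\downarrow 0$ yields a finite Green density $u_0^{\updownarrow}(x,y)=\frac{h(y)}{h(x)}u_0^{0}(x,y)$, continuous and hence locally bounded in $y$. Therefore, for any bounded Borel $B\subset\mathbb R$ and $x\neq 0$,
\begin{equation*}
\mathbb E_x^{\updownarrow}\!\left[\int_0^{\infty}\mathbf 1_B(X_t)\,dt\right]=\int_B u_0^{\updownarrow}(x,y)\,dy<\infty,
\end{equation*}
which is the criterion for transience from $x\neq 0$. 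To handle the starting point $x=0$, I would apply the Markov property at some fixed $t_0>0$: by Theorem~\ref{mainresult} the random variable $X_{t_0}$ is $\mathbb P_0^{\updownarrow}$-a.s.\ nonzero, so after time $t_0$ the process evolves as $\mathbb P_{X_{t_0}}^{\updownarrow}$, and the transience from each nonzero starting point propagates to $\mathbb P_0^{\updownarrow}$. The main obstacle I anticipate is the $q\downarrow 0$ limit in the recurrent regime where the individual terms of $u_q^{0}$ diverge; the symmetric reformulation above is designed precisely to isolate this divergence into a single harmless ratio, making the limit transparent.
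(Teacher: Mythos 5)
Your computation of the killed resolvent density is exactly the paper's: the identity $u_q^{0}(x,y)=h_q(x)+h_q(-y)-h_q(x-y)-h_q(x)h_q(-y)/u_q(0)$ is equation (\ref{eq4242}), and its $q\downarrow 0$ limit is Lemma \ref{densityofuaz}, so up to that point you are on the same track. The gap is in the last step. For a general (non spatially homogeneous) right Markov process such as $X^{\updownarrow}$, finiteness of $\mathbb{E}_x^{\updownarrow}\bigl[\int_0^{\infty}\mathbf 1_B(X_t)\,dt\bigr]$ for every bounded $B$ and every $x$ is \emph{not} by itself a criterion for transience: finite expected occupation time of a compact set does not preclude the process from returning to that set at arbitrarily large times, and there is no Chung--Fuchs type dichotomy to fall back on once spatial homogeneity is lost. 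Some additional regularity of the potential kernel must be brought in to convert finiteness of the Green density into genuine transience.

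The paper does this by verifying the two hypotheses of Theorem 3.7.2 in \cite{Chung-Walsh05}: (a) $U^{\updownarrow}g$ is lower semicontinuous for every nonnegative $g$ with compact support (which follows from the continuity of $u_0^{\updownarrow}(\cdot,y)$ via Fatou, and which your joint-continuity observation would also deliver), and (b) there exists a nonnegative $f$ with $0<U^{\updownarrow}f<\infty$ \emph{on all of} $\mathbb{R}$. Point (b) is the part your argument is missing: $f=\mathbf 1_B$ will not do, because one needs finiteness of a potential that is also strictly positive everywhere, and since $u_0^{\updownarrow}(x,\cdot)$ need not be integrable over $\mathbb{R}$ the paper must construct a specific decaying weight, $f(y)=1/(y^2[h^\ast(y)]^2)$ for $|y|>1$, whose integrability against $u_0^{\updownarrow}(x,\cdot)$ rests on the subadditivity of $h$, the domination $h\leq h^\ast$, and the limit $\lim_{|x|\to\infty}h^\ast(x)=\kappa^{-1}$ from Lemma \ref{excessiveofhast1}. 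Your proposal would be complete if you replaced the occupation-time criterion by this (or an equivalent) potential-theoretic criterion and supplied such a reference function.
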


In Lemma \ref{indoflpaz} we will prove that for any $x\in \mathcal{H}$, the point $x$ is regular for itself under $\mathbb{P}_x^\updownarrow$. Therefore, there exists a local time at any point $x\in\mathcal{H}$, and we will denote by $n_x^\updownarrow$ the excursion measure away from $x$ for the process $(X,\mathbb{P}_x^\updownarrow)$. In the following proposition we establish a relationship between the excursion measure away from zero for $(X,\mathbb{P})$ and the excursion measure away from $x$ for $(X,\mathbb{P}_x^\updownarrow)$, $x\in \mathcal{H}$.

\begin{proposition} \label{relexcmeasure}
For $x\in \mathcal{H}$, let $n_x^\updownarrow$ be the excursion measure out from $x$ for $(X, \mathbb{P}_x^\updownarrow)$ and $n$ the excursion measure out from zero for $(X, \mathbb{P})$. Then, for any measurable and bounded functional $H:\mathcal{D}^0 \rightarrow \mathbb{R}$, 
\begin{equation*}
n_x^\updownarrow \left( \int_0^\zeta H(\upepsilon_u, u<t) qe^{-qt} dt \right) = \frac{1}{h(x)} n \left( \int_0^\zeta H(\upepsilon_u + x, u<t)h(X_t+x)\mathbf{1}_{\{T_{-x}>t\}} qe^{-qt} dt \right). 
\end{equation*}
\end{proposition}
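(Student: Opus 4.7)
The strategy is to identify $n_x^\updownarrow$ as the Doob $h$-transform of the excursion measure of $(X,\mathbb{P}_x)$ away from $x$, restricted to excursions that avoid zero, and then to apply the translation invariance of the L\'evy process. I will prove the $t$-pointwise version of the identity and recover the statement by multiplying by $qe^{-qt}$ and integrating.

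First, I would use translation invariance. Because $(X,\mathbb{P})$ has stationary independent increments, the excursion measure $n_x$ of $(X,\mathbb{P}_x)$ out of $x$ is the pushforward of $n$ under the shift $\upepsilon\mapsto \upepsilon+x$, with matching normalization of the local time; hence for any measurable $G\ge 0$,
$$n_x(G(\upepsilon)) = n(G(\upepsilon+x)),$$
and the first hitting time of $0$ by an excursion under $n_x$ corresponds to the hitting time $T_{\{-x\}}$ under $n$.

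Next I would prove the key intermediate identity: for any $t>0$ and any $\mathcal{F}_t$-measurable, nonnegative $G$,
$$n_x^\updownarrow\bigl(G\,\mathbf{1}_{t<\zeta}\bigr) = \frac{1}{h(x)}\,n_x\bigl(G\cdot h(X_t)\,\mathbf{1}_{T_0>t,\,t<\zeta}\bigr).$$
Combining with the translation identity and specializing to $G=H(\upepsilon_u,u<t)$ yields, for each $t>0$,
$$n_x^\updownarrow\bigl(H(\upepsilon_u,u<t)\mathbf{1}_{t<\zeta}\bigr) = \frac{1}{h(x)}\,n\bigl(H(\upepsilon_u+x,u<t)\,h(X_t+x)\,\mathbf{1}_{T_{\{-x\}}>t,\,t<\zeta}\bigr),$$
and the statement of the proposition follows by multiplying both sides by $qe^{-qt}$ and integrating in $t$.

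The main obstacle is the justification of the $h$-transform identity above. I would prove it by checking that both sides share the same strong Markov structure and normalization, since these two data characterize excursion measures. Using the Markov property of $n$ together with the invariance $P_t^0 h = h$ from Theorem~\ref{propertiesofh}, one verifies that the candidate measure
$$G\mapsto\frac{1}{h(x)}\,n_x\bigl(G\cdot h(X_t)\,\mathbf{1}_{T_0>t,\,t<\zeta}\bigr)$$
is strongly Markovian with the semigroup $P_t^{\updownarrow,x}(y,dz)=\frac{h(z)}{h(y)}\mathbb{E}_y\bigl(\mathbf{1}_{X_t\in dz,\,t<T_0\wedge T_x}\bigr)$ of $X$ killed at $T_x$ under $\mathbb{P}^\updownarrow$. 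By Lemma~\ref{indoflpaz}, the point $x$ is regular for itself under $\mathbb{P}_x^\updownarrow$, so $n_x^\updownarrow$ is characterized by this semigroup up to a multiplicative constant. To pin down the constant, one tests the proposed identity at $G\equiv 1$ and matches Laplace transforms via (\ref{eq1111}), (\ref{eq888}), (\ref{eq1010}) and the normalization $n(h(X_t),t<\zeta)=1$ from Theorem~\ref{propertiesofh}.
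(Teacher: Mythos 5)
Your overall route is genuinely different from the paper's: you propose to establish the stronger, $t$-pointwise identity
$n_x^\updownarrow(G\,\mathbf{1}_{\{t<\zeta\}})=\frac{1}{h(x)}\,n_x(G\,h(X_t)\mathbf{1}_{\{T_0>t,\,t<\zeta\}})$
by an abstract characterization of excursion measures, then translate and integrate against $qe^{-qt}\,dt$. The translation step ($n_x(G(\upepsilon))=n(G(\upepsilon+x))$ with $T_0$ becoming $T_{\{-x\}}$) and the final Fubini step are fine, and the pointwise identity, if proved, does imply the proposition. The problem is the pivotal step. You assert that an excursion measure is characterized by its transition semigroup together with a single normalization constant, so that it suffices to check that the candidate measure is Markovian with the semigroup $\frac{h(z)}{h(y)}\mathbb{P}_y(X_t\in dz,\,t<T_0\wedge T_x)$ and then match total masses at $G\equiv 1$. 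That is not a characterization: two $\sigma$-finite Markovian measures with the same killed semigroup can have genuinely different entrance laws, and testing $G\equiv 1$ fixes a multiplicative constant only \emph{after} you know the two measures are proportional. What you would actually need is a uniqueness theorem for entrance laws of the It\^{o} excursion measure at a regular point (equivalently, that your candidate measure arises from an exit system for the point $x$ under $\mathbb{P}_x^\updownarrow$), and that is essentially the whole content of the proposition; as written, the key step is assumed rather than proved.

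For comparison, the paper avoids this uniqueness issue entirely by computing one concrete functional in two ways: the law of the excursion fragment $X\circ k_{\mathbf{e}_q-g^x_{\mathbf{e}_q}}\circ\theta_{g^x_{\mathbf{e}_q}}$ straddling an independent exponential time $\mathbf{e}_q$. Lemma \ref{indoflpaz}(ii) expresses $\mathbb{E}_x^\updownarrow(H(X^q))$ via the compensation formula for $(X,\mathbb{P}_x^\updownarrow)$ and its excursion measure $n_x^\updownarrow$ away from $x$; the proof of the proposition then unwinds the $h$-transform back to $\mathbb{P}$, applies the compensation formula for the excursions of $X$ away from $0$, and uses $h_q^\ast(x)=u_q^\updownarrow(x,x)$ together with Lemma \ref{axanddelta} (which identifies the drift $a^x$ with $\delta$, so the atom-at-$\bar{x}$ terms match). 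Equating the two expressions gives exactly the stated Laplace-transform identity. If you want to keep your $h$-transform formulation, the cleanest repair is to run this same two-sided compensation-formula computation, which simultaneously proves the proportionality you are missing and identifies the constant.
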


\section{Preliminary results} \label{proofssection}
\subsection{Some properties of $h_q$ and $h$} \label{spofh}
In order to prove the finiteness of $h$, we need the following lemma.

\begin{lemma} \label{lemmaaux}
Let $(X,\mathbb{P})$ be a L\'evy process with characteristic exponent $\psi$. Assume that $(X,\mathbb{P})$ 
satisfies the hypotheses {\bf H.1} and {\bf H.2}, then $\psi(\lambda)\neq 0$, for all $\lambda\neq 0$ and 
\begin{equation*}
\lim_{|\lambda|\rightarrow \infty} |\psi(\lambda)| = \infty. 
\end{equation*}
Furthermore, 
\begin{equation} \label{equ1313}
\int_{\mathbb{R}} (1\wedge \lambda^2) Re\left( \frac{1}{\psi(\lambda)} \right) d\lambda < \infty.
\end{equation}
\end{lemma}
\begin{proof}
The first part follows from the fact that $(X,\mathbb{P})$, because by assumption it is not a compound Poisson process (see e.g. \cite[Theorem 6.4.7]{Chung01}). Now, since $1/(1+\psi)$ is the Fourier transform of the integrable function $u_1$, then from the Riemann-Lebesgue theorem it follows that $\lim_{|\lambda|\rightarrow \infty} |\psi(\lambda)| = \infty$.

Using that $\lim_{|\lambda|\rightarrow \infty} |\psi(\lambda)| = \infty$, we deduce 
  $$Re\left( \frac{1}{\psi(\lambda)} \right) \sim Re\left( \frac{1}{1+\psi(\lambda)} \right), \quad |\lambda|\rightarrow \infty.$$
The latter and Theorem \ref{theoKesten} imply that for all $\lambda_0>0$,
\begin{equation} \label{equ1414}
\int_{|\lambda|>\lambda_0} Re\left( \frac{1}{\psi(\lambda)} \right) d\lambda <\infty.
\end{equation}
On the other hand, observe the elementary inequality
  $$Re\left( \frac{\lambda^2}{\psi(\lambda)} \right) = \lambda^2 \frac{ Re(\overline{\psi(\lambda)}) }{ | \psi(\lambda) |^2} = \lambda^2 \frac{ Re(\psi(\lambda)) }{ | \psi(\lambda) |^2} \leq \frac{\lambda^2}{Re(\psi(\lambda))}.$$
Notice that in the last inequality we use that $Re(\psi(\lambda))>0$, which easily follows from (\ref{eqaux9-2}). Thus,
\begin{eqnarray*}
\left[ Re\left( \frac{ \lambda^2}{ \psi(\lambda)} \right) \right]^{-1} 
&\geq& \frac{ Re(\psi(\lambda)) }{ \lambda^2 }  \\
&\geq& \sigma^2 + \int_{|y|<1} \frac{(1-\cos\lambda y)}{\lambda^2} \pi(dy) \\
&\longrightarrow& \sigma^2 + \int_{|y|<1} y^2 \pi(dy)>0, \quad \textnormal{as } \lambda\rightarrow 0. 
\end{eqnarray*}
The latter limit implies that there exists a $\lambda_0$ such that, 
\begin{equation} \label{equ1515}
Re\left( \frac{ \lambda^2}{ \psi(\lambda)} \right) \leq C, \quad \textnormal{for all } |\lambda|<\lambda_0,
\end{equation}
for some positive constant $C$. Then, from (\ref{equ1414}) and (\ref{equ1515}), we obtain (\ref{equ1313}).
\end{proof}

\begin{remark} \label{intcond}
Assume that  {\bf H.1}, {\bf H.2} and {\bf H.3} hold. On the one hand, by Lemma \ref{lemmaaux}, $Re(\psi(\lambda)) \to \infty$ as $|\lambda| \to \infty$, and hence
  \[
  \frac{1}{Re(\psi(\lambda))} \sim \frac{1}{1+Re(\psi(\lambda))}, \quad |\lambda| \to \infty.
  \]
This shows that for all $\lambda_0>0$,
\begin{equation*} 
\int_{|\lambda|>\lambda_0} \frac{1}{Re(\psi(\lambda))} d\lambda <\infty.
\end{equation*}
On the other hand, if we proceed as in last part of the proof of Lemma \ref{lemmaaux}, it can be proved that there exists a $\lambda_0$ such that, 
\begin{equation*} 
\frac{ \lambda^2}{ Re(\psi(\lambda))} \leq C, \quad \textnormal{for all } |\lambda|<\lambda_0,
\end{equation*}
for some positive constant $C$. Thus, under the assumptions  {\bf H.1}, {\bf H.2} and {\bf H.3}, it holds
\begin{equation*} 
\int_{\mathbb{R}}  \frac{1\wedge \lambda^2}{Re(\psi(\lambda))}  d\lambda < \infty.
\end{equation*}
\end{remark}

To establish some properties of $h$, we recall the representation of $h_q$ given in (\ref{eq:hqidentity}). Let $\mathbf{e}_q$ be an exponential random variable with parameter $q>0$ and independent of $(X,\mathbb{P})$. Using (\ref{equ777}) and (\ref{equ1212}), we can write 
\begin{equation} \label{equ1616}
\begin{split}
h_q(x) &= u_q(0)(1-\mathbb{E}_x(e^{-qT_0})) \\ 
&= \frac{\mathbb{P}_x(T_0>\mathbf{e}_q)}{n(\zeta>\mathbf{e}_q)},
\end{split}
\end{equation}
where
\begin{equation*} 
n(\zeta>\mathbf{e}_q) = \int_0^\infty qe^{-qt}n(\zeta>t)dt = \frac{1}{u_q(0)}.
\end{equation*}

From the expression (\ref{equ1616}) follows that if $(X,\mathbb{P})$ is transient, then
\begin{equation} \label{htransient}
h(x) = \lim_{q\to 0} h_q(x) = \kappa^{-1} \mathbb{P}_x(T_0=\infty),
\end{equation}
where $\kappa = \lim_{q\to 0} \frac{1}{u_q(0)}$. Furthermore, (\ref{equ1616}) helps us to prove the following Lemma, which summarizes some important properties of the sequence $(h_q)_{q>0}$.

\begin{lemma} \label{propertiesofhq}
For every $q>0$, the function $h_q$ is subadditive on $\mathbb{R}$ and it is excessive for the semigroup $(P_t^0,t\geq 0)$.
\end{lemma}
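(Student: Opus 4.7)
The plan is to leverage the probabilistic identity (\ref{eq1616}), $h_q(x) = u_q(0)\,\mathbb{P}_x(T_0 > \mathbf{e}_q)$, which turns both assertions into statements about the hitting time $T_0$ and the independent exponential clock $\mathbf{e}_q$. The strong Markov property of $X$ at a well-chosen stopping time, together with the memoryless property of $\mathbf{e}_q$, will then do the heavy lifting.

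For subadditivity, I would fix $x,y\in\mathbb{R}$, work under $\mathbb{P}_{x+y}$, and split $\{T_0 > \mathbf{e}_q\}$ according to the first hitting time $T_{\{y\}}:=\inf\{t>0:X_t=y\}$ of the level $y$. On $\{T_{\{y\}}>\mathbf{e}_q\}$ the event is trivially contained in $\{T_{\{y\}}>\mathbf{e}_q\}$, whose $\mathbb{P}_{x+y}$-probability equals $\mathbb{P}_x(T_0>\mathbf{e}_q)$ by translation invariance of the L\'evy process. On the complementary piece $\{T_{\{y\}}<\mathbf{e}_q\}$ (the null set $\{T_{\{y\}}=\mathbf{e}_q\}$ is discarded since $\mathbf{e}_q$ has a density and is independent of $X$), the extra constraint $T_0>\mathbf{e}_q$ forces $T_{\{y\}}<T_0$; applying the strong Markov property at $T_{\{y\}}$ together with the memoryless property (on $\{T_{\{y\}}<\mathbf{e}_q\}$, the overshoot $\mathbf{e}_q-T_{\{y\}}$ is again exponential with parameter $q$ and independent of $\mathcal{F}_{T_{\{y\}}}$) factors
\begin{equation*}
\mathbb{P}_{x+y}(T_0>\mathbf{e}_q,\,T_{\{y\}}<\mathbf{e}_q) = \mathbb{P}_{x+y}(T_{\{y\}}<\mathbf{e}_q,\,T_{\{y\}}<T_0)\cdot\mathbb{P}_y(T_0>\mathbf{e}_q) \leq \mathbb{P}_y(T_0>\mathbf{e}_q).
\end{equation*}
Adding the two bounds and multiplying through by $u_q(0)$ delivers $h_q(x+y)\leq h_q(x)+h_q(y)$.

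The excessive property is an easy consequence of the same probabilistic representation. Applying (\ref{eq1616}), the simple Markov property at time $t$ (using $T_0=t+T_0\circ\theta_t$ on $\{T_0>t\}$), and Fubini in $\mathbf{e}_q$ gives
\begin{equation*}
P_t^0 h_q(x) = u_q(0)\,\mathbb{E}_x\!\left[\mathbf{1}_{T_0>t}\,\mathbb{P}_{X_t}(T_0 > \mathbf{e}_q)\right] = u_q(0)\,\mathbb{P}_x(T_0 > t+\mathbf{e}_q).
\end{equation*}
Supermedianity $P_t^0 h_q\leq h_q$ is then immediate from the inclusion $\{T_0>t+\mathbf{e}_q\}\subset\{T_0>\mathbf{e}_q\}$, while $P_t^0 h_q(x)\uparrow h_q(x)$ as $t\downarrow 0$ by monotone convergence; the degenerate point $x=0$ is fine since $h_q(0)=0$ and $T_0=0$ $\mathbb{P}_0$-almost surely by \textbf{H.1}. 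The main subtle step throughout is the factorization in the subadditivity argument: one must verify that $\{T_0>\mathbf{e}_q,\,T_{\{y\}}<\mathbf{e}_q\}$ sits inside $\{T_{\{y\}}<T_0\}$ before invoking strong Markov, and combine this cleanly with the memoryless property of $\mathbf{e}_q$; once that is in place, the excessive property is essentially a one-line consequence of (\ref{eq1616}).
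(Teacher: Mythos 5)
Your proof is correct. The excessivity half is essentially the paper's own argument: both reduce to the identity $P_t^0h_q(x)=u_q(0)\,\mathbb{P}_x(T_0>t+\mathbf{e}_q)$ via the Markov property at the fixed time $t$ (the paper's display (\ref{eq1818})), deduce supermedianity from $\{T_0>t+\mathbf{e}_q\}\subset\{T_0>\mathbf{e}_q\}$, and get the limit as $t\downarrow 0$ by monotonicity. The subadditivity half is where you diverge: the paper quotes Proposition 43.4 of Sato for the supermultiplicativity $\mathbb{E}_{x+y}(e^{-qT_0})\geq\mathbb{E}_x(e^{-qT_0})\,\mathbb{E}_y(e^{-qT_0})$ and then combines it with the elementary inequality $(1-a)+(1-b)\geq 1-ab$ to conclude, whereas you give a direct path decomposition of $\{T_0>\mathbf{e}_q\}$ under $\mathbb{P}_{x+y}$ at the first hitting time $T_{\{y\}}$, using spatial homogeneity for the piece $\{T_{\{y\}}>\mathbf{e}_q\}$ and the strong Markov plus memoryless properties for the piece $\{T_{\{y\}}<\mathbf{e}_q\}$. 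The two arguments have the same probabilistic content (the cited Sato inequality is itself proved by a strong Markov decomposition at $T_{\{y\}}$), but yours is self-contained and makes the mechanism visible, at the cost of having to justify the factorization
\begin{equation*}
\mathbb{P}_{x+y}(T_0>\mathbf{e}_q,\,T_{\{y\}}<\mathbf{e}_q)=\mathbb{P}_{x+y}(T_{\{y\}}<T_0\wedge\mathbf{e}_q)\,\mathbb{P}_y(T_0>\mathbf{e}_q),
\end{equation*}
which you correctly identify as the delicate step; it is exactly the computation the paper later carries out (with a general stopping time $T$) in the proof of Theorem \ref{condavoidzero}, so nothing is missing.
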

\begin{proof}
By Proposition 43.4 in \cite{Sato}, we have that for any $q>0$ and $x,y\in \mathbb{R}$,
\begin{equation} \label{equ1717}
\mathbb{E}_{x+y}(e^{-qT_0}) \geq \mathbb{E}_x(e^{-qT_0})\mathbb{E}_y(e^{-qT_0}).
\end{equation}
Now, since
\begin{equation*}
(1-\mathbb{E}_x(e^{-qT_0}))(1-\mathbb{E}_y(e^{-qT_0}))\geq 0,
\end{equation*}
then using (\ref{equ1717}), it follows
\begin{equation*}
1-\mathbb{E}_x(e^{-qT_0}) + 1-\mathbb{E}_y(e^{-qT_0}) \geq 1 - \mathbb{E}_{x+y}(e^{-qT_0}).
\end{equation*}
Hence, by (\ref{equ1616})
\begin{equation*}
h_q(x+y)\leq h_q(x)+h_q(y), \quad x,y\in\mathbb{R}.
\end{equation*}
This shows that $h_q$ is subadditive on $\mathbb{R}$.

In order to show that $h_q$ is excessive for $P_t^0$, we observe that by the Markov property 
\begin{equation} \label{equ1818}
\mathbb{P}_x(T_0>t+\mathbf{e}_q) = \mathbb{E}_x \left( \mathbf{1}_{\{ T_0> t+\mathbf{e}_q \}} \right) = \mathbb{E}_x\left( \mathbb{P}_{X_t}(T_0>\mathbf{e}_q) \mathbf{1}_{\{t<T_0\}} \right).
\end{equation}
The identities (\ref{equ1818}) and (\ref{equ1616}) imply
\begin{eqnarray*}
\mathbb{E}_x(h_q(X_t),t<T_0) &=& \mathbb{E}_x \left( \frac{ \mathbf{1}_{\{ T_0> t+\mathbf{e}_q \}} }{ n(\zeta > \mathbf{e}_q) }  \right) \\
&\leq& \mathbb{E}_x \left( \frac{ \mathbf{1}_{\{ T_0> \mathbf{e}_q \}} }{ n(\zeta > \mathbf{e}_q) }  \right) \\
&=& h_q(x).
\end{eqnarray*}
The above expression also implies that $\lim_{t\to 0}\mathbb{E}_x(h_q(X_t),t<T_0)=h_{q}(x),$ for $x\in \mathbb{R}.$
This shows that $h_q$ is excessive for the semigroup $(P_t^0,t\geq 0)$.
\end{proof}

Before we proceed to the proof of (i) and (ii) in Theorem~\ref{propofh} we make a technical remark. 
\begin{remark}\label{techremark}
Proceeding as in the proof of Theorem 19 p. 65 in \cite{Bertoin}, it can be shown that
\begin{equation} \label{equ1919}
u_q(x) = \frac{1}{2\pi} \int_\mathbb{R} Re \left( \frac{e^{-i\lambda x}}{q+\psi(\lambda)} \right) d\lambda, \quad x\in\mathbb{R}.
\end{equation}
Then, 
\begin{equation*}
2u_q(0) - [u_q(x)+u_q(-x)] = \frac{1}{\pi} \int_\mathbb{R} (1-\cos \lambda x) Re\left( \frac{1}{q + \psi(\lambda)} \right) d\lambda.
\end{equation*}
On the other hand, making use of the inequality $|1-\cos b|\leq 2(1\wedge b^2)$ and (\ref{equ1313}), we obtain
\begin{equation*} 
\int_\mathbb{R} (1-\cos \lambda x) Re\left( \frac{1}{\psi(\lambda)} \right) d\lambda < \infty, \quad x\in \mathbb{R}.
\end{equation*}
Furthermore, since $Re(\psi(\lambda))>0$, 
  \[
  \left| (1-\cos \lambda x) Re\left( \frac{1}{ q + \psi(\lambda) } \right) \right| \leq \frac{2(1\wedge (\lambda x)^2) }{Re(\psi(\lambda))}.
  \]
The right-hand function of latter inequality is integrable by Remark \ref{intcond}. Thus, by the dominated convergence theorem, for all $x\in\mathbb{R}$,  
\begin{equation} \label{equ2020}
\lim_{q\rightarrow 0} (2u_q(0) - [u_q(x)+u_q(-x)]) = \frac{1}{\pi} \int_\mathbb{R} (1-\cos \lambda x) Re\left( \frac{1}{\psi(\lambda)} \right) d\lambda
\end{equation}
is finite.
\end{remark}

\begin{proof}[Proof of (i) and (ii) in Theorem~\ref{propofh}]
That $h$ is subadditive and excessive follows from Lemma \ref{propertiesofhq} (since these properties are preserved under limits of sequences of functions).

Using (\ref{equ1919}), we infer the following expression for $h_{q},$
\begin{equation*}
h_q(x) = \frac{1}{2\pi} \int_{-\infty}^\infty Re\left( \frac{1-e^{i\lambda x}}{q+\psi(\lambda)} \right) d\lambda.
\end{equation*}
We will deduce therefrom the identity (\ref{equ2121}). For that end, let $x\in \mathbb{R}$ fixed, and denote by $g_x:\mathbb{R}\to\mathbb{R}$ the integrand in (\ref{equ2121}), that is, 
  \[
  g_x(\lambda) = Re\left( \frac{1-e^{i\lambda x}}{ \psi(\lambda) } \right), \quad \lambda \in \mathbb{R}.
  \]
By hypothesis {\bf H.4}, we have that $g_{x}\in L_{1},$ for any $x\in \mathbb{R}$. Furthermore, $g_x(\lambda)$ is related to the integrand that defines $h_{q}$ through the equality
  \[
  Re\left( \frac{1-e^{i\lambda x}}{ q + \psi(\lambda) } \right) = g_x(\lambda) \frac{ |\psi(\lambda) |^2 }{ |q +\psi(\lambda)|^2} + q \frac{1-\cos(\lambda x) }{|q +\psi(\lambda)|^2}, \quad q>0, \lambda \in \mathbb{R}. 
  \]
Since for $q>0$, $\lambda\in \mathbb{R}$, we have
  \[
   \frac{ |\psi(\lambda) |^2 }{ |q +\psi(\lambda)|^2} \leq 1
  \]
and
\begin{align*}
|q +\psi(\lambda)|^2 \geq 2 qRe(\psi(\lambda)),
\end{align*}
we derive the following domination
\begin{equation*}
  \left| Re\left( \frac{1-e^{i\lambda x}}{ q + \psi(\lambda) } \right) \right| \leq |g_x(\lambda)| +  \frac{(1\wedge (\lambda x)^2) }{  Re(\psi(\lambda))}, \quad q>0, \lambda \in\mathbb{R}. 
\end{equation*}
The Remark \ref{intcond} allow us to ensure that $\int^{\infty}_{-\infty}\frac{1\wedge (\lambda x)^2 }{ Re(\psi(\lambda))}d\lambda <\infty.$ This observation together with the fact that $g_x\in L_{1}$ permit us to apply the dominated convergence theorem to establish that for any $x\in\mathbb{R}$ the limit of $h_{q}(x)$ as $q\to 0$ exists, is finite and furthermore $\lim_{q\to 0}h_{q}(x)=h(x)$ with $h$ as defined in (\ref{equ2121}).

Finally we prove the continuity of $h$. We note that for all $q>0$, $x\in \mathbb{R}$,
\begin{equation*}
h_q(x) \leq 2u_q(0) - [u_q(x) + u_q(-x)] = \frac{1}{\pi}\int_{-\infty}^\infty (1-\cos\lambda x ) Re\left( \frac{1}{q+\psi(\lambda)} \right) d\lambda.
\end{equation*}
Then, by (\ref{equ2020}), 
\begin{equation} \label{equ2222}
h(x) \leq \frac{1}{\pi} \int_{-\infty}^\infty (1-\cos\lambda x ) Re\left( \frac{1}{\psi(\lambda)} \right) d\lambda, \quad \forall\, x\in\mathbb{R}.
\end{equation}  
Now, observe that 
  $$(1-\cos \lambda x) Re\left( \frac{1}{\psi(\lambda)} \right) \leq 2(1\wedge \lambda^2) Re\left( \frac{1}{\psi(\lambda)} \right), \quad |x|\leq 1, \lambda\in \mathbb{R}.$$
Then, by (\ref{equ1313}) and the dominated convergence theorem, it follows that
\begin{equation*}
\lim_{x\rightarrow 0} \frac{1}{\pi}\int_{-\infty}^\infty (1-\cos\lambda x) Re\left( \frac{1}{\psi(\lambda)} \right) d\lambda = 0.
\end{equation*}
Hence, by (\ref{equ2222}), $\lim_{x\rightarrow 0} h(x)=0$. This proves that $h$ is continuous at zero. Furthermore, since $h$ is subadditive on $\mathbb{R}$, the continuity of $h$ at the point zero implies the continuity on the whole real line (see e.g. \cite[Theorem 6.8.2]{Hille-Phillips57}).
\end{proof}

To end this section, we establish the behaviour of $h$ at infinity.

\begin{lemma} \label{limitbehaviourofh}
Let $\kappa := \lim_{q\rightarrow 0} \frac{1}{u_q(0)}.$ We have the following 
\begin{enumerate}
\item[(i)]  
Suppose that $X$ is transient. If $0<\mu:=\mathbb{E}(X_{1})\leq \infty$, then
  $$\lim_{x\to \infty} h(x) = \frac{1}{\kappa}, \quad \lim_{x\to -\infty} h(x) = \frac{1}{\kappa} - \frac{1}{\mu};$$
while if $-\infty\leq \mu <0$, then
  $$\lim_{x\to \infty} h(x) = \frac{1}{\kappa} + \frac{1}{\mu}, \quad \lim_{x\to -\infty} h(x) = \frac{1}{\kappa}.$$ 

\item[(ii)] Suppose that $X$ is recurrent, then either
\begin{equation*}
\lim_{x\rightarrow \infty} h(x) = \infty \quad \text{or} \quad \lim_{x\to -\infty} h(x) = \infty.
\end{equation*} 
\end{enumerate}
\end{lemma}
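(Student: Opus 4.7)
The idea is to treat (i) and (ii) separately, reducing each to a classical potential-theoretic asymptotic through a suitable representation of $h$.

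For (i), the moment assumption guarantees that $\mu := \mathbb{E}(X_1)$ is well defined in $[-\infty,0) \cup (0,\infty]$, so by the strong law $X_t \to \pm\infty$ a.s.\ and $X$ is therefore transient. Consequently, for every $y \in \mathbb{R}$ the resolvent density $u_q(y)$ converges, as $q \downarrow 0$, to a finite continuous potential density $u(y)$, with $u(0) = 1/\kappa < \infty$. Passing to the limit in $h_q(x) = u_q(0) - u_q(-x)$ yields the clean representation
\begin{equation*}
h(x) \;=\; \frac{1}{\kappa} - u(-x), \qquad x \in \mathbb{R}.
\end{equation*}
I would then invoke the renewal theorem for one-dimensional L\'evy processes with non-zero mean (see, e.g., \cite{Bertoin} or \cite{Port-Stone71}): when $\mu > 0$,
\begin{equation*}
\lim_{y \to +\infty} u(y) \;=\; \frac{1}{\mu}, \qquad \lim_{y \to -\infty} u(y) \;=\; 0
\end{equation*}
(with the convention $1/\mu = 0$ when $\mu = +\infty$). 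Substituting $y = -x$ and sending $x \to \pm\infty$ produces the two claimed limits in the case $\mu > 0$. The case $\mu < 0$ follows either by the same argument with the roles of $+\infty$ and $-\infty$ exchanged, or by applying the result already proved to the dual process $\widehat{X} = -X$, whose mean is $-\mu > 0$, together with the identity $\widehat{u}(y) = u(-y)$.

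For (ii), the process $X$ is recurrent, so $u_q(0) \to \infty$, $\kappa = 0$, and the claim becomes $h(x) \to +\infty$ as $|x| \to \infty$. The identity $h = 1/\kappa - u(-\cdot)$ from part (i) is no longer meaningful, so I would use instead the alternative representation $h(x) = k\, L_{\{0\}}(x)$ with $k \in (0,\infty)$ furnished by Remarks~\ref{remarksfromPS}(ii), combined with the Port--Stone asymptotic for recurrent one-dimensional L\'evy processes: the $P_t^B$-invariant function $L_B$ diverges to $+\infty$ as $|x| \to \infty$ for any non-polar Borel set $B$ with $U_B(\cdot,A)$ locally bounded (see \cite[Sec.~23]{Port-Stone71}). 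Specializing to $B = \{0\}$, which is non-polar by \textbf{H.1}, gives $L_{\{0\}}(x) \to \infty$ and hence $h(x) \to \infty$.

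The principal obstacle is (ii): the divergence $h(x) \to \infty$ rests on the non-trivial large-$|x|$ behaviour of the recurrent potential kernel in one dimension, and this is exactly the point where the full strength of the Port--Stone framework must be invoked. Part (i), by contrast, is essentially a repackaging of Stone's renewal theorem through the clean identity $h = 1/\kappa - u(-\cdot)$, so the work there lies mostly in identifying $u(0) = 1/\kappa$ and in handling the two cases of the sign of $\mu$ symmetrically via the dual process.
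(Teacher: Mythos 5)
Your proposal is correct and follows essentially the same route as the paper: part (i) is the renewal theorem applied to the potential density via $h(x)=u_0(0)-u_0(-x)$ (the paper realizes this by writing $u_0=\sum_n u_1^{\ast n}$ and checking the hypotheses of Smith's density renewal theorem with Riemann--Lebesgue and Plancherel, which is the content hidden in your citation), and part (ii) rests on the Port--Stone recurrent potential theory together with duality for the direction $x\to-\infty$. The only cosmetic difference is that in (ii) the paper deduces the divergence from $Af(x)\to\infty$ (Theorem 17.10 of Port--Stone) through the identity $kh(x)=Af(x)-Af(0)\mathbb{P}_x(T_0<\infty)+U^0f(x)$ rather than quoting the divergence of $L_{\{0\}}$ directly.
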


\begin{remark}
The case where $X$ is transient and $\mathbb{E}(X_1^+) = \mathbb{E}(X_1^-)=\infty$ is not covered by the latter Lemma, but could be analysed using (\ref{htransient}).
\end{remark}

\begin{proof}[Proof of Lemma \ref{limitbehaviourofh}]
We only prove (i) in the case $0<\mu\leq \infty$, the other case can be proved similarly. Set $f(x)=u_1(x)$, $x\in\mathbb{R}$. Note that $u_0(x) = \sum_{n=1}^\infty f^{\ast n}(x)$. Indeed, 
\begin{eqnarray*}
\sum_{n=1}^\infty f^{\ast n}(x) dx &=& \sum_{n=1}^\infty \int_0^\infty  \frac{s^{n-1}}{(n-1)!} e^{-s} \mathbb{P}(X_s\in dx)ds \\
&=& \int_0^\infty e^{-s} \sum_{n=1}^\infty  \frac{s^{n-1}}{(n-1)!}  \mathbb{P}(X_s\in dx)ds \\
&=& \int_0^\infty  \mathbb{P}(X_s\in dx)ds \\
&=& u_0(x)dx.
\end{eqnarray*}
Furthermore, the Fourier transform of $f$ is given by $\widehat{f}(\lambda) = 1/(1+\psi(\lambda))$, $\lambda\in \mathbb{R}$. Since $h(x) = u_0(0)-u_0(-x)$, it suffices to compute the limit at infinity of $\sum_{n=1}^\infty f^{\ast n}(x)$. To that aim, we use the main result in \cite{Smith55}, which states that if
\begin{enumerate}
\item[(a)] $\lim_{|x|\to \infty} f(x) = 0$,

\item[(b)] $f$ is in $L_{1+\epsilon}$, for some $\epsilon>0$,
\end{enumerate}
then 
  $$\sum_{n=1}^\infty f^{\ast n}(x) \to \frac{1}{\mu}, \text{ as } x\to \infty, \quad \sum_{n=1}^\infty f^{\ast n}(x) \to 0, \text{ as } x\to -\infty.$$
The condition (a) is obtained from the Riemann-Lebesgue theorem. To show that (b) is satisfied we use Plancherel's theorem (see \cite[p. 186]{Rudin87}, \cite[p. 202]{Widder46}). Plancherel's theorem allow us to ensure that $f\in L_2$ if and only if $\widehat{f}\in L_2$. This fact can be easily obtained from Theorem~\ref{theoKesten}, since
$$\int_{\mathbb{R}}|\widehat{f}(\lambda)|^2d\lambda=\int_{\mathbb{R}}\frac{1}{|1+\psi(\lambda)|^{2}}d\lambda\leq \int_{\mathbb{R}}\frac{Re(1+\psi(\lambda))}{|1+\psi(\lambda)|^{2}}d\lambda=\int_{\mathbb{R}}Re\left(\frac{1}{1+\psi(\lambda)}\right)d\lambda<\infty.$$
This concludes the first part of the lemma.

To prove the second part of the Lemma, we consider the function $h^\ast$ which is defined in Section \ref{auxfunction} below. There, it is shown that $h^\ast(x) = h(x) + h(-x) =\mathbb{E}(L_{T_{x}})$ (see (\ref{equ3333}) and (\ref{equ3434}) for details). We will prove that $h^\ast(x)$ tends to infinity as $x\to\infty$ when $X$ is recurrent and thus obtain (ii). The proof is as follows. Let $\mathbf{e}_q$ be an independent exponential time. Observe the elementary inequality
 $$h^\ast(x)=\mathbb{E}(L_{T_{x}})\geq \mathbb{E}(L_{T_{x}}1_{\{T_{x}\geq \mathbf{e}_q\}})\geq \mathbb{E}(L_{\mathbf{e}_q}1_{\{T_{x}\geq\mathbf{e}_q\}}),$$ 
Hence, by Fatou's lemma
$$\liminf_{x\to\infty}h^\ast(x)\geq \mathbb{E}(L_{\mathbf{e}_q}\liminf_{x\to\infty}1_{\{T_{x}\geq \mathbf{e}_q\}}).$$
Now, by the Riemann-Lebesgue theorem 
$$\lim_{x\to\infty}\mathbb{E}(e^{-qT_{x}})=\lim_{x\to\infty}\frac{u_{q}(x)}{u_{q}(0)}=0,\quad \forall\, q>0.$$ 
Hence, it follows that $T_{x}$ converges weakly towards $\infty$ as $x\to \infty$. On the other hand, applying the Lemma 3 in \cite[Chapter V]{Bertoin},
  $$  u_q(0) = \mathbb{E}\left( \int_0^\infty e^{-qt} dL_t \right) =  \mathbb{E}\left( \int_0^\infty L_t qe^{-qt} dt \right) = \mathbb{E}(L_{\mathbf{e}_q}),\quad \forall\,q>0.$$
We have so proved that
  $$\liminf_{x\to\infty}h^\ast(x)\geq \mathbb{E}(L_{\mathbf{e}_q})  = u_q(0),\quad \forall\,q>0.$$ 
Letting $q$ tend to $0$ and using that in the recurrent case $u_{0}(0)= \infty$, we obtain 
  $$h^{*}(x) = h(x) + h(-x) \to \infty, \quad x\to \infty.$$ 
\end{proof}

\subsection{An auxiliary function} \label{auxfunction}

Let $(h_q^\ast)_{q>0}$ be the increasing sequence of functions defined by
\begin{equation*}
h_q^\ast(x) = \mathbb{E}\left(\int_0^{T_x} e^{-qt} dL_t\right), \quad q>0, x\in\mathbb{R}.
\end{equation*}
The sequence $(h_q^\ast)_{q>0}$ satisfies the following properties.

\begin{proposition}
For any $q>0$, the function $h_q^\ast$ is symmetric, nonnegative, subadditive continuous, which can be expressed in terms of the $q$-resolvent density as
\begin{equation} \label{equ2828}
h_q^\ast(x) = u_q(0) - \frac{u_q(x)u_q(-x)}{u_q(0)}, \quad x\in\mathbb{R}.
\end{equation}
\end{proposition}
\begin{proof}
By definition, $h_q^\ast$ is a non negative function. The continuity and symmetry of $h_q^\ast$ is obtained from (\ref{equ2828}). Thus, it only remains to prove (\ref{equ2828}) and that $h_q^\ast$ is subadditive.

First, we recall an expression that establishes a relation between resolvent densities and local times, (see Lemma 3 and the commentary before Proposition 4 in \cite[Chapter V]{Bertoin}):
\begin{equation} \label{equ2929}
u_q(-x) = \mathbb{E}_x\left( \int_0^\infty e^{-qt} dL_t \right) = \mathbb{E}\left( \int_0^\infty e^{-qt} dL(x,t) \right), \quad q>0, x\in\mathbb{R},
\end{equation}
where $(L(x,t), t\geq 0)$ is the local time at point $x$ for $(X,\mathbb{P})$. Thus, using the latter expression, we have
\begin{equation} \label{equ3030}
u_q(0) = \mathbb{E}\left( \int_0^\infty e^{-qt} dL_t \right) = h_q^\ast(x) + \mathbb{E}\left( \int_{T_x}^\infty e^{-qt} dL_t, T_x<\infty \right).
\end{equation}
On the other hand, by Markov and additivity properties of the local time, it follows 
\begin{eqnarray*}
\mathbb{E}\left( \int_{T_x}^\infty e^{-qt} dL_t, T_x<\infty \right) &=& \mathbb{E}\left( e^{-qT_x} \int_0^{\infty} e^{-qu} dL_{u+T_x}, T_x<\infty \right) \\
&=& \mathbb{E}(e^{-qT_x}, T_x<\infty) \mathbb{E}_x\left( \int_0^\infty e^{-qu} dL_u \right) \\
&=& \widehat{\mathbb{E}}_x(e^{-qT_0}, T_0<\infty) \mathbb{E}_x\left( \int_0^\infty e^{-qu} dL_u \right). \\
\end{eqnarray*}
Then, using (\ref{equ999}) and (\ref{equ2929}), the equation (\ref{equ3030}) becomes 
  $$u_q(0) = h_q^\ast(x) + \frac{u_q(x)}{u_q(0)} u_q(-x), \quad x\in\mathbb{R}.$$
From where (\ref{equ2828}) follows.

Now, we prove the subadditivity of $h_q^\ast$. The procedure is similar to the one used to prove the subadditivity of $h_q$ in Lemma \ref{propertiesofhq}. We repeat the arguments for clarity. First, by (\ref{equ777}) and (\ref{equ999}) we can write (\ref{equ2828}) as
\begin{equation} \label{equ3131}
h_q^\ast(x) = u_q(0)(1-\mathbb{E}_x(e^{-qT_0})\widehat{\mathbb{E}}_x(e^{-qT_0})).
\end{equation}
Since, for any $x\in\mathbb{R}$, $\mathbb{E}_x(e^{-qT_0}), \widehat{\mathbb{E}}_x(e^{-qT_0})\leq 1$, it follows
\begin{equation*}
(1-\mathbb{E}_x(e^{-qT_0})\widehat{\mathbb{E}}_x(e^{-qT_0}))(1-\mathbb{E}_y(e^{-qT_0})\widehat{\mathbb{E}}_y(e^{-qT_0}))\geq 0, \quad x,y\in\mathbb{R}.
\end{equation*}
The latter relation and (\ref{equ1717}) imply 
\begin{align*}
1-\mathbb{E}_x(e^{-qT_0})\widehat{\mathbb{E}}_x(e^{-qT_0}) + 1-\mathbb{E}_y(e^{-qT_0})\widehat{\mathbb{E}}_y(e^{-qT_0}) &\geq 1 - \mathbb{E}_x(e^{-qT_0})\mathbb{E}_y(e^{-qT_0})\widehat{\mathbb{E}}_x(e^{-qT_0})\widehat{\mathbb{E}}_y(e^{-qT_0}) \\
&\geq 1-\mathbb{E}_{x+y}(e^{-qT_0})\widehat{\mathbb{E}}_{x+y}(e^{-qT_0}), 
\end{align*}
for all $x,y\in\mathbb{R}$. Hence, by (\ref{equ3131})
\begin{equation*}
h_q^\ast(x) + h_q^\ast(y) \geq h_q^\ast(x+y), \quad x,y\in\mathbb{R}.
\end{equation*}
This ends the proof.
\end{proof}

\begin{remark}
With the help of the expression (\ref{equ2828}), $h_q^\ast$ can be written in terms of the function $h_q$ as: 
\begin{equation} \label{equ3232}
h_q^\ast(x) = h_q(x)+h_q(-x)-\frac{1}{u_q(0)}h_q(x)h_q(-x), \quad x\in\mathbb{R}.
\end{equation}
\end{remark}

Now, define $h^\ast$ by
\begin{equation*}
h^\ast(x) = \lim_{q\rightarrow 0} h_q^\ast(x), \quad x\in\mathbb{R}.
\end{equation*}
Since $h$ is finite, then (\ref{equ3232}) implies that $h^\ast(x)$ is finite for all $x\in\mathbb{R}$. Furthermore, since 
\begin{equation*}
h_q^\ast(x) = \mathbb{E}\left(\int_0^{T_x} e^{-qt} dL_t\right), 
\end{equation*}
then
\begin{equation} \label{equ3333} 
h^\ast(x) = \mathbb{E}(L_{T_x}). 
\end{equation}
It is known that $L_{T_x}$ is the first time where an excursion from zero that hits $x$ appears. Thus, $h^\ast(x)$ is the expected value of an exponential random variable. We also note that by (\ref{equ3232}), in the recurrent symmetric case, $h^\ast$ corresponds to $2h^Y$, where $h^Y$ is the invariant function given in \cite{Yano10}.

To state the following lemma let us introduce further notation that will be mainly used in the next two proofs. Let $Z=(Z_{t}, t\geq 0)$ be the process given by
  \[
  Z_t = X_{t}-Y_{t}, \quad t\ge 0,
  \] 
where $X=(X_{t}, t\geq 0)$ and $Y=(Y_{t}, t\geq 0)$ are independent and identically distributed L\'evy processes both with characteristic exponent $\psi$ under $\mathbb{P}$. In other words, $(Z_{t}, t\geq 0)$ is the symmetric L\'evy process obtained by symmetrisation of $X$. A straightforward computation shows that the characteristic exponent of $Z$ is $2Re(\psi).$ The objects associated to $Z$ will be denoted with a tilde, so $\widetilde{u_{q}}$ will denote its $q$-resolvent density and $\widetilde{h}_{q}(x)=\widetilde{u}_{q}(0)-\widetilde{u}_{q}(-x),$ for $q>0$, $x\in \mathbb{R}.$

\begin{lemma} \label{lemmaap1}
\begin{enumerate}
\item[(i)] For any $x\in \mathbb{R}$, $\lim_{q\rightarrow 0} qu_q(x) = 0$.

\item[(ii)] For any $q,r>0$, $x\in\mathbb{R}$, we have the following identity
\begin{equation*}
\int_\mathbb{R} u_q(y-x)u_r(y) dy = \int_{\mathbb{R}} \widetilde{u}_{q+r}(y)[ u_r(x-y) + u_q(y-x)] dy. 
\end{equation*}
\end{enumerate}
\end{lemma}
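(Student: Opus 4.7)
\emph{Part (i).} The first move is a simple reduction to $x=0$. Identity (\ref{eq999}) gives $u_q(x)=u_q(0)\,\widehat{\mathbb{E}}_x[e^{-qT_0}]$, so $u_q(x)\le u_q(0)$ for every $x\in\mathbb{R}$, and hence $qu_q(x)\le qu_q(0)$. It therefore suffices to prove $qu_q(0)\to 0$ as $q\to 0$. From the Fourier representation (\ref{eq1919}),
\[
qu_q(0)=\frac{1}{2\pi}\int_{\mathbb{R}}\frac{q\bigl(q+\theta(\lambda)\bigr)}{|q+\psi(\lambda)|^{2}}\,d\lambda,\qquad \theta=Re\,\psi.
\]
I would split this integral at a fixed $|\lambda|=\epsilon$ and estimate the two pieces separately.

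For the outer region $|\lambda|\ge\epsilon$: Lemma~\ref{lemmaaux} gives $\psi(\lambda)\neq 0$ off the origin and $|\psi(\lambda)|\to\infty$ at infinity, so for $q\in(0,1]$ there is a constant $C_{\epsilon}$ with
\[
\frac{q+\theta(\lambda)}{|q+\psi(\lambda)|^{2}}\le C_{\epsilon}\,\frac{1+\theta(\lambda)}{|1+\psi(\lambda)|^{2}}=C_{\epsilon}\,Re\!\left(\frac{1}{1+\psi(\lambda)}\right),
\]
uniformly for $|\lambda|\ge\epsilon$. Multiplying by $q$ and invoking the Kesten--Bretagnolle bound (Theorem~\ref{theoKesten}) shows that this piece is $O(q)$, hence vanishes as $q\to 0$. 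For the inner region $|\lambda|<\epsilon$: under H.1--H.2 one has $\sigma^{2}+\int_{|y|<1}y^{2}\pi(dy)>0$, because in the case $\sigma=0$ the hypothesis $\int_{|y|<1}|y|\pi(dy)=\infty$ forces $\pi$ to have nontrivial mass near the origin. As in the proof of Lemma~\ref{lemmaaux}, this yields a quadratic lower bound $\theta(\lambda)\ge c\lambda^{2}$ on a neighbourhood of zero, and hence
\[
\frac{q\bigl(q+\theta(\lambda)\bigr)}{|q+\psi(\lambda)|^{2}}\le \frac{q}{q+c\lambda^{2}}.
\]
The rescaling $\lambda=\sqrt{q/c}\,\mu$ turns $\int_{|\lambda|<\epsilon}q/(q+c\lambda^{2})\,d\lambda$ into $\sqrt{q/c}\int_{|\mu|<\epsilon\sqrt{c/q}}d\mu/(1+\mu^{2})$, which is $O(\sqrt{q})$. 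Adding the two pieces gives $qu_q(0)\to 0$.

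\emph{Part (ii).} The natural route is via Fourier transforms. Using $\int e^{i\lambda y}u_q(y)\,dy=1/(q+\psi(\lambda))$ together with the translation rule and Plancherel yields
\[
\int_{\mathbb{R}}u_q(y-x)u_r(y)\,dy=\frac{1}{2\pi}\int_{\mathbb{R}}\frac{e^{-i\lambda x}}{\bigl(q+\psi(-\lambda)\bigr)\bigl(r+\psi(\lambda)\bigr)}\,d\lambda,
\]
where $\psi(-\lambda)=\overline{\psi(\lambda)}$ since $X$ is real-valued. On the other hand, Fourier inversion for $u_r$ and $u_q(-\cdot)$ gives
\[
\frac{u_r(x)+u_q(-x)}{q+r}=\frac{1}{2\pi(q+r)}\int_{\mathbb{R}}e^{-i\lambda x}\left[\frac{1}{r+\psi(\lambda)}+\frac{1}{q+\psi(-\lambda)}\right]d\lambda.
\]
The identity is then reduced to a partial-fraction manipulation of the integrands, after which Fourier inversion gives the claim. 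The key technical tool is the identification
\[
\frac{1}{(q+\psi(-\lambda))(r+\psi(\lambda))}=\frac{1}{q+r}\left[\frac{1}{r+\psi(\lambda)}+\frac{1}{q+\psi(-\lambda)}\right],
\]
which one should justify via the resolvent equation (\ref{eq666}) applied to the pair $(X,\widehat{X})$ (or, equivalently, by direct algebraic manipulation of the two fractions using the duality relation $\psi(-\lambda)+\psi(\lambda)=2\theta(\lambda)$).

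The main obstacle is the interior-$\lambda$ estimate in (i), which is the only place where H.1--H.2 enter in an essential way: everything hinges on producing the quadratic lower bound $\theta(\lambda)\gtrsim \lambda^{2}$ near zero, and this requires a careful argument in the pure-jump case $\sigma=0$ with infinite $\int_{|y|<1}|y|\pi(dy)$.
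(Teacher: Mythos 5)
Your part (i) is correct, and it takes a more quantitative route than the paper. The reduction to $x=0$ via $u_q(x)=u_q(0)\widehat{\mathbb{E}}_x[e^{-qT_0}]\le u_q(0)$ is essentially the paper's. For the outer region you should actually verify the constant: clearing denominators, the inequality $\frac{q+\theta}{|q+\psi|^{2}}\le C_\epsilon\frac{1+\theta}{|1+\psi|^{2}}$ for all $q\in(0,1]$ reduces to $1+2\theta(\lambda)\le (C_\epsilon-1)|\psi(\lambda)|^{2}$, which holds with $C_\epsilon=1+m_\epsilon^{-2}+2m_\epsilon^{-1}$, $m_\epsilon=\inf_{|\lambda|\ge\epsilon}|\psi(\lambda)|>0$ (using $\theta\le|\psi|$ and Lemma~\ref{lemmaaux}); so the claim is justifiable but not automatic. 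The paper avoids the splitting entirely: writing $j_q(\lambda)=Re\bigl(q/(q+\psi(\lambda))\bigr)=\bigl[1+\theta(\lambda)/q+(Im\,\psi(\lambda))^{2}/(q(q+\theta(\lambda)))\bigr]^{-1}$, one sees $j_q\downarrow 0$ pointwise as $q\downarrow 0$ and $j_q\le j_1=Re(1/(1+\psi))$ for $q\le 1$, which is integrable by Theorem~\ref{theoKesten}; dominated convergence then gives the limit. Your argument buys a rate ($O(q)+O(\sqrt q)$) at the cost of the extra estimates.

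Part (ii) contains a genuine gap: the ``key technical tool''
\begin{equation*}
\frac{1}{(q+\psi(-\lambda))(r+\psi(\lambda))}=\frac{1}{q+r}\left[\frac{1}{r+\psi(\lambda)}+\frac{1}{q+\psi(-\lambda)}\right]
\end{equation*}
is false. Putting the right-hand side over a common denominator yields the numerator $q+r+\psi(\lambda)+\psi(-\lambda)=q+r+2\theta(\lambda)$, not $q+r$ --- the very duality relation you invoke is what destroys the cancellation, and the resolvent equation (\ref{eq666}) applies to the composition $U_qU_r$ of resolvents of the \emph{same} process, not to the mixed product $\widehat{u}_q\hspace{1pt}\widehat{\ast}\hspace{1pt}u_r$ you need here. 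Consequently the two Fourier transforms you compute differ by the factor $(q+r+2\theta(\lambda))/(q+r)$, and no partial-fraction manipulation can remove the $2\theta$ term. This is not a repairable slip in your write-up: your (correct) Fourier setup actually shows that the two sides of the asserted identity have different transforms, and a direct check in the Brownian case $u_q(x)=(2q)^{-1/2}e^{-\sqrt{2q}|x|}$ gives $\int_{\mathbb{R}}u_q(y)^{2}dy=(2q\sqrt{2q})^{-1}$ versus $2u_q(0)/(2q)=(q\sqrt{2q})^{-1}$ at $x=0$, $q=r$, a discrepancy by a factor of $2$. For comparison, the paper's proof reaches the stated formula by identifying $\int\!\!\int f(y-z)u_q(z)u_r(y)\,dz\,dy$ with $\frac{1}{rq}\mathbb{E}\bigl(f(X_{\mathbf{e}_r}-X_{\mathbf{e}_q})\bigr)$ for a \emph{single} path $X$ sampled at two independent exponential times, whereas the double integral corresponds to two independent copies of the process; these do not have the same law, which is exactly where the extra $2\theta(\lambda)$ your computation detects comes from. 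You should therefore not expect to close the argument along the proposed route without revisiting the statement itself.
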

\begin{proof}
Recall the identity
  $$\frac{u_q(x)}{u_q(0)} = \widehat{\mathbb{E}}_{x}(e^{-qT_0}), \quad x\in\mathbb{R}.$$
Hence, $qu_q(x) \sim \widehat{\mathbb{P}}_{x}(T_0<\infty)qu_q(0)$ as $q\downarrow 0$. Thus, it suffices to prove the case $x=0$. Thanks to (\ref{equ1919}), we have
\begin{equation*}
qu_q(0) = \frac{1}{2\pi} \int_{\mathbb{R}} Re \left( \frac{q}{q+\psi(\lambda)} \right) d\lambda.
\end{equation*} 
For every $q>0$, let $j_q$ be the integrand in the above display. Now, we observe the following
\begin{align*}
j_q(\lambda) &= Re\left( \frac{q}{q+\psi(\lambda)}\right) \\
&= \frac{ q(q+Re(\psi(\lambda))) }{ [q+Re(\psi(\lambda))]^2 + [Im(\psi(\lambda))]^2} \\
&=  \left[1 + \displaystyle{ \frac{Re\psi(\lambda)}{q} + \frac{(Im\psi(\lambda))^2}{q(q+Re\psi(\lambda))} } \right]^{-1}, \quad q>0, \lambda\in \mathbb{R}.
\end{align*}
Hence, $j_q\downarrow 0$, as $q\downarrow 0$. Thus, $0\leq j_q(\lambda) \leq j_1(\lambda)$, $0<q<1$, $\lambda \in \mathbb{R}$ and $j_1$ is integrable by Theorem \ref{theoKesten}. Then, by the dominated convergence theorem $\lim_{q\rightarrow 0} qu_q(0) = 0$. This shows (i).

Now, let $f$ be a positive, bounded, measurable function. By construction we have
\begin{align*}
\int_\mathbb{R} \int_\mathbb{R} u_q(y-x)u_r(y) f(x) dy dx &= \int_\mathbb{R} \int_\mathbb{R} f(y-z)u_q(z) u_r(y) dy dz \\
&= \frac{1}{rq} \mathbb{E} \left( f(X_{\mathbf{e}_r} - Y_{\mathbf{e}_q}) \right) \\
&= \frac{1}{rq} \mathbb{E} \left( f(X_{\mathbf{e}_r} - Y_{\mathbf{e}_q}) \mathbf{1}_{\{\mathbf{e}_r > \mathbf{e}_q \}} \right) \\
&\quad + \frac{1}{rq} \mathbb{E} \left( f(-(Y_{\mathbf{e}_q} - X_{\mathbf{e}_r}) ) \mathbf{1}_{\{\mathbf{e}_q > \mathbf{e}_r \}} \right),
\end{align*}
where $\mathbf{e}_q$, $\mathbf{e}_r$ are exponential random variables with parameters $q>0$ and $r>0$, respectively, such that $\mathbf{e}_q$, $\mathbf{e}_r,$ $Y$ and $X$ are independent under $\mathbb{P}$. Applying the independence and stationarity property of the increments of $X$, we observe that the first term in the latter equation becomes
\begin{align*}
\frac{1}{rq} \mathbb{E} \left( f(X_{\mathbf{e}_r} - Y_{\mathbf{e}_q}) \mathbf{1}_{\{\mathbf{e}_r > \mathbf{e}_q \}} \right) &= \frac{1}{rq} \mathbb{E} \left( f(X_{\mathbf{e}_r} - X_{\mathbf{e}_q} + Z_{\mathbf{e}_q}) \mathbf{1}_{\{\mathbf{e}_r > \mathbf{e}_q \}} \right) \\
&= \int_0^\infty \int_s^\infty  \mathbb{E}(f(X_t-X_s + Z_s)) e^{-rt}e^{-qs}dt ds \\
&= \int_0^\infty \int_s^\infty  \mathbb{E}(f(W_{t-s}+Z_s))e^{-r(t-s)} e^{-(r+q)s}dtds,
\end{align*}
where $W=(W_t, t\geq 0)$ has the same law as $X$ and is independent of $Z$ under $\mathbb{P}$. Thus, 
\begin{align*}
\frac{1}{rq} \mathbb{E} \left( f(X_{\mathbf{e}_r} - Y_{\mathbf{e}_q}) \mathbf{1}_{\{\mathbf{e}_r > \mathbf{e}_q \}} \right) &= \int_0^\infty \int_s^\infty e^{-r(t-s)} \mathbb{E}(f(W_{t-s}+Z_s)) e^{-(r+q)s}dtds \\
&= \int_0^\infty \int_0^\infty e^{-rt} \mathbb{E}(f(W_t + Z_s)) e^{-(q+r) s} dt ds \\
&= \int_\mathbb{R} \int_\mathbb{R} f(z+y) u_r(z) \widetilde{u}_{q+r}(y) dy dz \\ 
&= \int_\mathbb{R} \int_\mathbb{R} \widetilde{u}_{q+r}(y) u_r(x-y) f(x) dy dx.
\end{align*}
In the same way, it can be verified that
\begin{align*}
\frac{1}{rq} \mathbb{E} \left( f(-(Y_{\mathbf{e}_q} - X_{\mathbf{e}_r}) ) \mathbf{1}_{\{\mathbf{e}_q > \mathbf{e}_r \}} \right) &= \frac{1}{rq} \mathbb{E} \left( f(-(Y_{\mathbf{e}_q} - Y_{\mathbf{e}_r}) + Z_{\mathbf{e}_r} ) \mathbf{1}_{\{\mathbf{e}_q > \mathbf{e}_r \}} \right) \\
&= \int_\mathbb{R} \int_\mathbb{R} \widetilde{u}_{q+r}(y) u_q(y-x) f(x) dy dx.
\end{align*}
Thus, we have
  $$\int_\mathbb{R} \int_\mathbb{R} u_q(y-x)u_r(y) f(x) dy dx = \int_\mathbb{R} \int_\mathbb{R} \widetilde{u}_{q+r}(y) \left[ u_r(x-y) +  u_q(y-x) \right] f(x) dy dx,$$
for all positive, bounded, measurable function $f$. By the continuity and boundeness of $u_r$ and $u_q$, we conclude that
  $$\int_\mathbb{R} u_q(y-x)u_r(y) dy =  \int_{\mathbb{R}} \widetilde{u}_{q+r}(y)[ u_r(x-y) + u_q(y-x)] dy, $$
for any $q,r>0$, $x\in\mathbb{R}$. 
\end{proof}

Some properties of the function $h^\ast$ are summarized in the following lemma.

\begin{lemma} \label{excessiveofhast1}
The function $h^\ast$ is a symmetric, nonnegative, subadditive, continuous function which vanishes only at the point $x=0$ and $\lim_{|x|\rightarrow \infty} h^\ast(x)=\kappa^{-1}$. Furthermore, $h^\ast$ is integrable with respect to semigroup of the process killed at $T_0$, i.e., $P_t^0 h^\ast(x)<\infty$, for all $t>0$, $x\in\mathbb{R}$. 
\end{lemma}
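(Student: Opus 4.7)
The plan is to base everything on the explicit representation
\begin{equation*}
h^\ast(x) = h(x) + h(-x) - \kappa\, h(x)\,h(-x), \quad x\in\mathbb{R},
\end{equation*}
obtained from (\ref{eq3232}) by letting $q\to 0$ and using $1/u_q(0)\to\kappa$; each term on the right is finite by Theorem~\ref{propertiesofh}(i). Given this formula, most of the listed properties reduce to those of $h$: symmetry is immediate (or one may appeal to (\ref{eq2828}) in the limit); non-negativity is already built into $h^\ast(x)=\mathbb{E}(L_{T_x})$; continuity is inherited from that of $h$ via the representation; and subadditivity passes from the $q$-level statement (subadditivity of $h_q^\ast$, just established) to the pointwise limit.

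For the zero set I would argue that, since $\kappa h(y)=\mathbb{P}_y(T_0=\infty)\in[0,1]$ by Remark~\ref{remarksfromPS}(i),
\begin{equation*}
h^\ast(x) = h(x)\bigl(1-\kappa h(-x)\bigr) + h(-x) \geq h(-x),
\end{equation*}
and by symmetry $h^\ast(x)\geq h(x)\vee h(-x)$. Since $h$ vanishes only at the origin (Theorem~\ref{propertiesofh}(ii)), the same holds for $h^\ast$. For the behaviour at infinity, by symmetry of $h^\ast$ only the limit at $+\infty$ is needed, and I would invoke Lemma~\ref{limitbehaviourofh}. In the recurrent case $\kappa=0$ and $h(\pm x)\to\infty=\kappa^{-1}$, so $h^\ast(x)\to\kappa^{-1}$. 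In the transient case with $0<\mu\leq\infty$, substituting $h(x)\to 1/\kappa$ and $h(-x)\to 1/\kappa-1/\mu$ makes the drift contributions cancel to leave $1/\kappa$; the case $\mu<0$ is symmetric.

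The main obstacle is the integrability $P_t^0 h^\ast(x)<\infty$, which is non-trivial in the recurrent case where $h^\ast$ is unbounded. My plan is to drop the killing and exploit subadditivity of $h^\ast$,
\begin{equation*}
P_t^0 h^\ast(x)\leq \mathbb{E}_x[h^\ast(X_t)] = \mathbb{E}[h^\ast(x+X_t)] \leq h^\ast(x) + \mathbb{E}[h^\ast(X_t)],
\end{equation*}
and then to use the bound $h^\ast(y)\leq h(y)+h(-y)$ together with the Fourier identity
\begin{equation*}
h(y)+h(-y) = \frac{1}{\pi}\int_{\mathbb{R}}(1-\cos\lambda y)\,Re\!\left(\frac{1}{\psi(\lambda)}\right)d\lambda
\end{equation*}
obtained from (\ref{eq2020}) in the limit $q\to 0$. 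Since the integrand is non-negative (recall $Re\,\psi\geq 0$), Fubini applies and yields
\begin{equation*}
\mathbb{E}[h(X_t)+h(-X_t)] = \frac{1}{\pi}\int_{\mathbb{R}}\bigl(1-Re\, e^{-t\psi(\lambda)}\bigr)\,Re\!\left(\frac{1}{\psi(\lambda)}\right)d\lambda.
\end{equation*}
The technical heart is then the estimate that this integral is finite: on $|\lambda|\geq 1$ the integrand is dominated by $2\,Re(1/\psi(\lambda))$, which is integrable by (\ref{eq1414}); on $|\lambda|\leq 1$ a Taylor expansion gives $1-Re\, e^{-t\psi(\lambda)} = t\,Re\,\psi(\lambda)+O(|\psi(\lambda)|^2)$, so after dividing by $|\psi(\lambda)|^2$ the integrand remains bounded as $\lambda\to 0$. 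Combining these bounds delivers $P_t^0 h^\ast(x)<\infty$.
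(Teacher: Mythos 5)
Your proof is correct, but the key step --- the $P_t^0$-integrability of $h^\ast$ --- is handled by a genuinely different argument from the paper's. The paper works at the resolvent level: using the resolvent equation (\ref{eq666}) and Lemma \ref{lemmaap1}(ii) it computes $U_qh_r$ and $U_q\widehat{h}_r$ explicitly, lets $r\to 0$ by Fatou, and adds the two bounds to obtain $qU_qs(x)\le s(x)$ for $s(x)=h(x)+h(-x)$, so that $s$ is supermedian and $P_t^0h^\ast\le P_t^0s\le s<\infty$. You instead use subadditivity of $h^\ast$ to reduce everything to the single quantity $\mathbb{E}[s(X_t)]$, evaluated by Tonelli through the Fourier representation of $s$ coming from (\ref{eq2020}); the resulting integral $\pi^{-1}\int_{\mathbb{R}}(1-Re\,e^{-t\psi(\lambda)})Re(1/\psi(\lambda))\,d\lambda$ is finite by your two-regime estimate (even more simply, $1-Re\,e^{-t\psi}\le|1-e^{-t\psi}|\le t|\psi|$ bounds the integrand by $t$ everywhere, making the split at $|\lambda|=1$ unnecessary). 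Both arguments are sound; what the paper's route buys is the $t$-uniform domination $P_t^0h^\ast\le s$ and $qU_qh^\ast\le s$ recorded in Remark \ref{remarkofh}(ii), which serve as dominating functions in the later proofs (Lemma \ref{resolventofh}, Theorem \ref{propertiesofh}(iii), Proposition \ref{theo5}); your bound $P_t^0h^\ast(x)\le h^\ast(x)+\mathbb{E}[s(X_t)]$ suffices for the present lemma but would not replace those estimates without extra work. Your treatment of the zero set is also different and slightly cleaner: the inequality $h^\ast(x)\ge h(x)\vee h(-x)$, obtained from $\kappa h\le 1$ (Remark \ref{remarksfromPS}(i)) together with the symmetry of $h^\ast$, avoids the paper's detour through the subadditivity-plus-limit argument of Theorem \ref{propertiesofh}(ii). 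The remaining items (symmetry, non-negativity, continuity, subadditivity, and the limit at infinity via Lemma \ref{limitbehaviourofh} and (\ref{eq3434})) follow the paper's lines.
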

\begin{proof}
From the definition of $h_q^\ast$ and (\ref{equ2828}) the non negativity and symmetry of $h^\ast$ follows. The subadditivity of $h^\ast$ is obtained from subadditivity of the sequence $(h_q^\ast)_{q>0}$. We observe that from (\ref{equ3232}), we can write $h^\ast$ in terms of $h$ as 
\begin{equation} \label{equ3434} 
h^\ast(x) = h(x)+h(-x)-\kappa h(x)h(-x),
\end{equation}
where $\kappa = \lim_{q\rightarrow 0} \frac{1}{u_q(0)}$. Hence, $h^\ast$ is continuous.

Now, we prove that $\lim_{|x|\rightarrow \infty} h^\ast(x) = \kappa^{-1}$. In Lemma \ref{limitbehaviourofh} it has been proved that if $X$ is recurrent then $\lim_{x\to \infty} h^\ast(x) = \infty$. Since $h^\ast$ is a symmetric function, the same limit is obtained as $x\to -\infty$. Then, $\lim_{|x|\rightarrow \infty} h^\ast(x) = \kappa^{-1}$ when $X$ is recurrent. Suppose that $X$ is transient. In Lemma \ref{limitbehaviourofh} we established that
  $$\liminf_{x\to \infty} h^\ast(x) \geq u_q(0), \quad \forall \, q>0,$$
without further assumptions. Hence, letting $q$ tends to $0$, we obtain, $\liminf_{x\to \infty} h^\ast(x) \geq \kappa^{-1}$. On the other hand, taking $\mathbf{e}_q$ an exponential independent time with parameter $q$, we have
\begin{equation*}
\begin{split}
\mathbb{E}(L_{T_{x}})&=\mathbb{E}(L_{T_{x}}1_{\{T_{x}\geq \mathbf{e}_q\}}) + \mathbb{E}(L_{T_{x}}1_{\{T_{x}< \mathbf{e}_q\}})\\
&\leq \mathbb{E}\left(L_{T_{x}}\left(1-e^{-q T_{x}}\right)\right)  + \mathbb{E}(L_{\mathbf{e}_q})\\
&=\mathbb{E}\left(L_{T_{x}}\left(1-e^{-q T_{x}}\right)\right)+u_q(0), \quad \forall \, x\in \mathbb{R}.
\end{split}
\end{equation*}
Then, since $X$ is transient, $\mathbb{E}(L_{T_x})<\infty$. Thus, by the dominated convergence theorem 
  $$h^\ast(x) = \mathbb{E}(L_{T_x}) \leq \kappa^{-1}.$$
Hence, $\limsup_{x\to \infty} h^\ast(x)\leq \kappa^{-1}$. Therefore, $\lim_{x\to \infty} h^\ast(x) = \kappa^{-1}$ if $X$ is transient. Since $h^\ast(x)$ is a symmetric function the same limit is obtained as $x\to -\infty$. Therefore, $\lim_{|x|\rightarrow \infty} h^\ast(x) = \kappa^{-1}$.

From the defintion, $h^\ast(0)=0$. To prove that $x=0$ is the only point where $h^\ast$ vanishes, we proceed by contradiction. Suppose that $h^\ast(x_0)=0$, for some $x_0\neq 0$. Using the subadditivity and symmetry of $h^\ast$, and making induction we get that $h^\ast(kx_0)=0$ for all $k\in\mathbb{Z}.$ Since $\lim_{|x|\to\infty}h^\ast(x) = \kappa^{-1}>0$, the claim $h^\ast(kx_0)=0$, for all $k\in\mathbb{Z}$ is a contradiction. Therefore, $h^\ast(x)>0$, for all $x\neq 0$.

Finally, we prove that $h^\ast$ is $P_t^0$-integrable. For $x\in\mathbb{R}$, we write $\widehat{h}_q(x)=h_q(-x)$, $q>0$, and $\widehat{h}(x) = \lim_{q\to 0} \widehat{h}_q(x)$. Let $S$ be the function defined by $S(x) = h(x) + \widehat{h}(x)$. By (\ref{equ3434}), $h^\ast(x) \leq S(x)$, $x\in\mathbb{R}$. Thus, it suffices to show that $S$ is $P_t^0$-integrable. Now, by (\ref{equ666}), the following identities hold for $0<r<q$ ,
\begin{eqnarray} \nonumber
U_qh_r(x) &=& \int_{\mathbb{R}} u_q(y-x)h_r(y)dy \\ \nonumber
&=&
\int_{\mathbb{R}} u_q(y-x)\{u_r(0)-u_r(-y)\}dy \\ \nonumber
&=&
\frac{u_r(0)}{q}-\int_{\mathbb{R}} u_q(y-x)u_r(-y)dy \\ \nonumber
&=&
\frac{u_r(0)}{q} - \frac{1}{q-r}\{ u_r(-x)-u_q(-x) \} \\ \label{eqaux3-2}
&=&
\frac{h_r(x)}{q} - \frac{ ru_r(-x) }{ q(q-r) } + \frac{ u_q(-x) }{ q-r }.
\end{eqnarray}
Thanks to Lemma \ref{lemmaap1} (i), $h_r(x)\rightarrow h(x)$ as $r\rightarrow 0$ and Fatou's lemma, we obtain 
\begin{equation} \label{eqaux5-2}
U_{q}h(x) \leq \frac{h(x)+u_q(-x)}{q}, \quad q>0, x\in\mathbb{R}.
\end{equation} 
Now, we will obtain a bound for $U_q\widehat{h}$ in terms of the process $Z$, the symmetrisation of $X$, and the sequence $(\widetilde{h}_{r})$ introduced before Lemma \ref{lemmaap1}. We note that since the characteristic exponent of $Z$ is $2Re(\psi)$, then by (\ref{equ111}) 
  \[
  \widetilde{h}_r(x) = \widetilde{u}_r(0) - \widetilde{u}_r(-x) = \frac{1}{2\pi} \int_{\mathbb{R}}\frac{1-\cos(\lambda x)}{r+2Re(\psi(\lambda))} d\lambda, \quad r>0, x\in\mathbb{R}.
  \]
Furthermore, by monotone convergence theorem and Remark \ref{intcond}
  \[
  \lim_{r\to 0} \widetilde{h}_r(x) = \frac{1}{2\pi} \int_{\mathbb{R}}\frac{1-\cos(\lambda x)}{2Re(\psi(\lambda))} d\lambda \leq \frac{1}{2\pi} \int_{\mathbb{R}}\frac{1\wedge (\lambda x)^2}{Re(\psi(\lambda))} d\lambda < \infty, \quad x\in\mathbb{R}.
  \]
On the other hand, for all $r>0$, $x\in \mathbb{R}$, it holds
\begin{equation*}
\begin{split}
u_r(0) - u_r(x) &\leq h_{r}(x)+h_{r}(-x) =  \frac{1}{2\pi} \int_{\mathbb{R}}\frac{1-\cos(\lambda x)}{|r+\psi(\lambda)|^2}[r+Re(\psi(\lambda))]d\lambda\\
&\leq  \frac{1}{2\pi} \int_{\mathbb{R}}\frac{1-\cos(\lambda x)}{r+Re(\psi(\lambda))}d\lambda = 2\left( \frac{1}{2\pi} \int_{\mathbb{R}}\frac{1-\cos(\lambda x)}{2r+2Re(\psi(\lambda))}d\lambda \right) = 2\widetilde{h}_{2r}(x). 
\end{split}
\end{equation*}
Proceeding as in (\ref{eqaux3-2}) and with the help of Lemma \ref{lemmaap1} (ii), we obtain
\begin{eqnarray*} \nonumber
U_q\widehat{h}_r(x) &=& \frac{1}{q}u_{r}(0)-U_{q}u_{r}(x) = \frac{1}{q}u_{r}(0)-\int_\mathbb{R} u_{q}(y-x) u_{r}(y) dy \\
&=& \int_\mathbb{R} \widetilde{u}_{q+r}(y) \left[u_{r}(0)-u_{r}(x-y)\right]dy +\frac{r}{q} u_{r}(0) \int_\mathbb{R} \widetilde{u}_{q+r}(y)dy - \int_\mathbb{R} \widetilde{u}_{q+r}(y)u_{q}(y-x) dy\\
&=& \int_\mathbb{R} \widetilde{u}_{q+r}(y) \left[u_{r}(0)-u_{r}(x-y)\right] dy + \frac{r}{q(q+r)}u_{r}(0) -\frac{1}{q+r} \mathbb{E}(u_{q}(Z_{\mathbf{e}_{q+r}}-x)) \\
&\leq& 2\int_\mathbb{R} \widetilde{u}_{q+r}(y)\widetilde{h}_{2r}(x-y) dy +\frac{r}{q(q+r)}u_{r}(0)-\frac{1}{q+r}\mathbb{E}(u_{q}(Z_{\mathbf{e}_{q+r}}-x)) \\
&=& 2\left[\frac{\widetilde{h}_{2r}(x)}{q+r} - \frac{ (2r) \widetilde{u}_{2r}(-x) }{ (q+r)(q-r) } + \frac{ \widetilde{u}_{q+r}(-x) }{ q-r }\right] + \frac{r}{q(q+r)}u_{r}(0) -\frac{1}{q+r}\mathbb{E}(u_{q}(Z_{\mathbf{e}_{q+r}}-x)),
\end{eqnarray*}
notice that in the final identity we applied the identity (\ref{eqaux3-2})  for the process $Z$. For the last term in the above equality, we have that since 
  \[
  \widetilde{u}_{q+r}(x) = \frac{1}{2\pi} \int_\mathbb{R} \frac{ \cos(\lambda x) }{ q+r + 2Re(\psi(\lambda))} d\lambda,
  \]
it follows $\widetilde{u}_{q+r}(x) \uparrow \widetilde{u}_{q}(x)$, as $r\to 0$, for any $x\in \mathbb{R}$. Hence by the monotone convergence theorem
\begin{equation*}
\begin{split}
\lim_{r\to 0} \frac{1}{q+r}\mathbb{E}(u_{q}(Z_{\mathbf{e}_{q+r}}-x)) &= \lim_{r\to 0} \int_\mathbb{R} \widetilde{u}_{q+r}(y)u_{q}(y-x) dy \\
&=  \int_\mathbb{R} \widetilde{u}_{q}(y)u_{q}(y-x) dy \\
&=  \frac{1}{q}\mathbb{E}(u_{q}(Z_{\mathbf{e}_{q}}-x)),
\end{split}
\end{equation*}
which is finite since $u_q$ is a bounded function. Thus, using again Lemma \ref{lemmaap1} (i), $\widehat{h}_r(x) \to \widehat{h}(x)$, $\widetilde{h}_r(x) \to \widetilde{h}(x)$ as $r\to 0$, and Fatou's lemma, it follows that
\begin{equation} \label{eqaux6-2}
U_q\widehat{h}(x) \leq 2\left[\frac{\widetilde{h}(x)}{q} + \frac{ \widetilde{u}_{q}(-x) }{ q }\right]-\frac{1}{q}\mathbb{E}(u_{q}(Z_{\mathbf{e}_{q}}-x))
\end{equation}
is finite for all $q>0$, $x\in\mathbb{R}$. Adding (\ref{eqaux5-2}) and (\ref{eqaux6-2}), we obtain that for any $q>0$, $x\in\mathbb{R}$,
\begin{equation} \label{eqaux7-2}
qU_qS(x) \leq h(x)+u_q(-x) + 2[\widetilde{h}(x) + \widetilde{u}_{q}(-x)] - \mathbb{E}(u_{q}(Z_{\mathbf{e}_{q}}-x))
\end{equation}
is finite. Hence, the function $S$ is $P_t$-integrable and therefore $P_t^0$-integrable. 
\end{proof}

\begin{remark} \label{remarkofh}
By (\ref{equ1616}) and (\ref{equ3131}), we have $h_q(x)\leq h_q^\ast(x)\leq h^\ast(x)\leq S(x)$, for all $q>0$, $x\in\mathbb{R}$. On the other hand, the Lemma \ref{excessiveofhast1} and its proof ensure that $h^\ast$ satisfies that $P_t^0h^\ast(x)$ is finite for all $t>0$, $x\in\mathbb{R}$ and
\begin{equation} \label{eqaux8-2}
qU_qh^\ast(x) \leq \alpha_q(x), \quad q>0, x\in \mathbb{R},
\end{equation}
where $\alpha_q(x)$ is the function in the right hand side of (\ref{eqaux7-2}). The latter inequality will be useful in the proofs of Lemma \ref{resolventofh} and assertion (iii) in Theorem \ref{propofh}. 
\end{remark}

The next lemma ensures that the inequality obtained in (\ref{eqaux5-2}) in fact is an equality. This result was established in \cite{Yano10} in the symmetric case.

\begin{lemma} \label{resolventofh}
For any $q>0$, $x\in\mathbb{R}$,  
\begin{equation*}
U_qh(x) = \frac{h(x)+u_q(-x)}{q}. 
\end{equation*}
\end{lemma}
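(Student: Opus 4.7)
The plan is to promote the inequality derived in (\ref{eqaux5-2}) into an equality by justifying a dominated convergence argument in the limit $r\to 0$ of the identity (\ref{eqaux3-2}). Recall that identity reads
\begin{equation*}
U_qh_r(x) = \frac{h_r(x)}{q} - \frac{ru_r(-x)}{q(q-r)} + \frac{u_q(-x)}{q-r}, \quad 0<r<q.
\end{equation*}
The right-hand side has an unambiguous limit as $r\downarrow 0$: the first term tends to $h(x)/q$ by the definition of $h$ in (\ref{eq:hhdef}), the middle term tends to zero by Lemma~\ref{lemmaap1}(i) (which gives $ru_r(-x)\to 0$), and the last term tends to $u_q(-x)/q$. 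So it suffices to show that $\lim_{r\downarrow 0} U_q h_r(x) = U_q h(x)$.

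To do this I would invoke the dominating function $s(y) = h(y)+\widehat h(y)$ introduced in the proof of Lemma~\ref{excessiveofhast1}. By Remark~\ref{remarkofh}(ii), we have the pointwise uniform bound $h_r(y) \leq h_r^\ast(y) \leq h^\ast(y) \leq s(y)$ for every $r>0$ and $y\in\mathbb{R}$. Moreover, by (\ref{eqaux7-2}), $qU_q s(x) \leq s(x) < \infty$, which means $y\mapsto u_q(y-x)\,s(y)$ is integrable with respect to Lebesgue measure for every $x\in\mathbb{R}$. Since $h_r(y)\to h(y)$ pointwise as $r\downarrow 0$, the dominated convergence theorem yields
\begin{equation*}
\lim_{r\downarrow 0} U_q h_r(x) = \lim_{r\downarrow 0}\int_{\mathbb{R}} u_q(y-x) h_r(y)\,dy = \int_{\mathbb{R}} u_q(y-x) h(y)\,dy = U_q h(x).
\end{equation*}

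Combining the two limits gives $U_q h(x) = h(x)/q + u_q(-x)/q$, which is the claimed identity. The only non-routine point is verifying that the dominating function $s$ makes dominated convergence applicable, and this is precisely what the careful construction of $h^\ast$ and the bound (\ref{eqaux7-2}) were designed to provide; all other ingredients are immediate from earlier results in the paper.
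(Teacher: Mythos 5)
Your proposal is correct and follows essentially the same route as the paper: pass to the limit $r\downarrow 0$ in (\ref{eqaux3-2}), using Lemma~\ref{lemmaap1}(i) for the right-hand side and dominated convergence (with the domination $h_r\leq h^\ast$ and the finiteness of $U_qh^\ast$ from Remark~\ref{remarkofh}(ii)) for the left-hand side. You merely spell out the domination via $s$ in more detail than the paper does.
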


\begin{proof}
Remark \ref{remarkofh} states that the function $h^\ast$ satisfies $h_q(x)\leq h^\ast(x)$, $U_qh^\ast(x)<\infty$, for all $q>0$, $x\in\mathbb{R}$. Then, by the dominated convergence theorem and (\ref{eqaux3-2}), it follows
\begin{equation*}
U_qh(x) = \lim_{r\rightarrow 0} U_q h_r(x) = \frac{h(x)+u_q(-x)}{q}.
\end{equation*}
\end{proof}

\section{Proofs of the main results} \label{proofsofmain}

We will first prove Theorem~\ref{propofh}. Recall that in Section~\ref{spofh} we proved the claims in (i) and (ii). Here we will prove (iii).

\begin{proof}[Proof of (iii) in Theorem \ref{propofh}]
The first part of the proof is inspired by the proof of Lemma 1 in \cite{Chaumont-Doney05}. Let $\mathbf{e}_q$ be an exponential random variable with parameter $q>0$, and independent of $(X,\mathbb{P})$. We claim that for $q>0$, $x\in\mathbb{R}$, 
\begin{equation} \label{equ3535}
\mathbb{E}_x(\mathbb{P}_{X_t}(T_0>\mathbf{e}_q)\mathbf{1}_{\{T_0>t\}}) = e^{qt}\left( \mathbb{P}_x(T_0>\mathbf{e}_q) - \int_0^t\mathbb{P}_x(T_0>s)qe^{-qs}ds \right).
\end{equation}
Indeed, by (\ref{equ1818}), we have
\begin{equation*} 
\mathbb{E}_x\left( \mathbb{P}_{X_t}(T_0>\mathbf{e}_q) \mathbf{1}_{\{t<T_0\}} \right) = \mathbb{P}_x(T_0>t+\mathbf{e}_q).
\end{equation*}
Now, making the change of variables $u=t+s$, we obtain 
\begin{align*}
\mathbb{P}_x(T_0>t+\mathbf{e}_q) &= \int_0^\infty \mathbb{P}_x(T_0>t+s)q^{-qs}ds \\
&= e^{qt} \int_t^\infty \mathbb{P}_x(T_0>u) qe^{-qu} du \\
&= e^{qt} \left( \int_0^\infty \mathbb{P}_x(T_0>u)qe^{-qu}du - \int_0^t\mathbb{P}_x(T_0>u)qe^{-qu}du \right) \\
&= e^{qt}\left( \mathbb{P}_x(T_0>\mathbf{e}_q) - \int_0^t\mathbb{P}_x(T_0>u)qe^{-qu}du \right).
\end{align*}
Hence, (\ref{equ3535}) follows.

By Remark \ref{remarkofh} and Lemma \ref{excessiveofhast1}, we derive that the sequence $(h_q)_{q>0}$ is dominated by $h^\ast$ and $h^\ast$ is integrable with respect to $P_t^0$ for any $t>0$. Then, using dominated convergence theorem, (\ref{equ1616}) and (\ref{equ3535}), it follows 
\begin{align*}
\mathbb{E}_x \left( h(X_t), t<T_0 \right) &= \mathbb{E}_x \left( \lim_{q\rightarrow 0} h_q(X_t), t<T_0 \right) \\
&= \lim_{q\rightarrow 0} \mathbb{E}_x \left( \frac{ \mathbb{P}_{X_t}(T_0>\mathbf{e}_q) }{ n(\zeta > \mathbf{e}_q) } \mathbf{1}_{\{t<T_0\}} \right) \\
&= \lim_{q\rightarrow 0} e^{qt} \left( \frac{ \mathbb{P}_x ( T_0> \mathbf{e}_q )}{ n(\zeta > \mathbf{e}_q) } - \int_0^t \frac{ \mathbb{P}_x ( T_0>u )}{n(\zeta > \mathbf{e}_q) } qe^{-qu}du \right) \\
&= h(x) - \frac{1}{n(\zeta)} \int_0^t \mathbb{P}_x ( T_0>u )du,
\end{align*}
where $n(\zeta) = \lim_{q\rightarrow 0} \int_0^\infty e^{-q t} n(\zeta>t)dt$. On the other hand, Lemma \ref{lemmaap1} (i) and (\ref{equ1212}) imply $n(\zeta)=\lim_{q\rightarrow 0}[qu_q(0)]^{-1} = \infty$. Therefore, we conclude
\begin{equation*}
\mathbb{E}_x \left( h(X_t), t<T_0 \right) = h(x), \quad t>0, x\in\mathbb{R}.
\end{equation*}

Now, we prove the second part of (iii) in Theorem \ref{propofh}. To that aim we compute the Laplace transform of $n(h(X_t), t<\zeta)$. From (\ref{equ1111}) and Lemma \ref{resolventofh}, we obtain that the Laplace transform of $n(h(X_t), t<\zeta)$ is given by
  $$\int_0^\infty e^{-qt}n(h(X_t), t<\zeta) dt = \int_{\mathbb{R}} h(x)\widehat{\mathbb{E}}_x[e^{-qT_0}] dx = \int_{\mathbb{R}} h(x)\frac{u_q(x)}{u_q(0)}dx = \frac{1}{u_q(0)} U_qh(0) = \frac{1}{q}.$$
Hence, the claim follows. 
\end{proof}

We will next prove Theorem~\ref{mainresult}.

\begin{proof}[Proof of Theorem \ref{mainresult}]
The only thing which has to be proved is the fact that $\mathbb{P}_0^\updownarrow$ is a Markovian probability measure with the same semigroup as under $\mathbb{P}_x^\updownarrow$, $x\in \mathcal{H}$ and that $\mathbb{P}_0^\updownarrow(X_0=0)=1$. Recall $n$ is a Markovian measure ($\sigma$-finite) with semigroup $(P_t^0, t\geq 0)$. Let $g$ be any bounded Borel function and $\Lambda \in \mathcal{F}_t$ and $t,s>0$:
\begin{eqnarray*}
\mathbb{E}_0^\updownarrow(\mathbf{1}_\Lambda g(X_{t+s})) &=& n( \mathbf{1}_\Lambda h(X_{t+s})g(X_{t+s}) \mathbf{1}_{\{t<\zeta\}}) \\
&=& n(\mathbf{1}_\Lambda \mathbb{E}_{X_t}^0(h(X_s)g(X_s)) \mathbf{1}_{\{t<\zeta\}}) \\
&=& n(\mathbf{1}_\Lambda h(X_t) \mathbb{E}_{X_t}^\updownarrow(g(X_s)) \mathbf{1}_{\{t<\zeta\}}) \\
&=& \mathbb{E}_0^\updownarrow(\mathbf{1}_\Lambda \mathbb{E}_{X_t}^\updownarrow(g(X_s))).
\end{eqnarray*}
This shows the first part. Now, we prove that $\mathbb{P}_0^\updownarrow(X_0=0)=1$. Since $X$ is right continuous at 0, it suffices to prove that for any $z>0$,
\begin{equation*}
\mathbb{P}_0^\updownarrow(|X_\epsilon|<z) \rightarrow 1,
\end{equation*}
as $\epsilon \rightarrow 0$. The latter is equivalent to prove
\begin{equation*}
\lim_{\epsilon \rightarrow 0} n(\mathbf{1}_{\{|X_\epsilon|>z\}} h(X_\epsilon) \mathbf{1}_{\{\epsilon<\zeta\}}) = 0.
\end{equation*}
Since for $s>0$, $n(h(X_s), s<\zeta) = 1$, the measure defined on $\mathcal{F}_s$, $\mathcal{Q}_s(\cdot):=n(\cdot,h(X_s), s<\zeta)$ is a probability measure. Then, from the Markov property, for all $\epsilon<s$, $\mathbb{P}_0^\updownarrow(|X_\epsilon|<z) = \mathcal{Q}_s(\mathbf{1}_{\{|X_\epsilon|<z\}})$. Using that excursions of the L\'evy process $(X,\mathbb{P})$ leave $0$ continuously, because 0 is assumed to be regular for itself, we derive $\mathbf{1}_{\{|X_\epsilon|<z\}} \rightarrow 1$, $\mathcal{Q}_s$-a.s. as $\epsilon\rightarrow 0$. The result follows from the dominated converge theorem.
\end{proof}

\begin{proof}[Proof of Theorem \ref{condavoidzero}] 
We proceed as in \cite{Chaumont-Doney05}. Let $x\in \mathcal{H}$, $\Lambda\in \mathcal{F}_t$, $t>0$. With the help of the Markov property and since $\mathbf{e}_q$ is independent of $(X, \mathbb{P})$, we can deduce the following
\begin{eqnarray*}
\mathbb{E}_x \left( \mathbf{1}_\Lambda \mathbf{1}_{\{t<\mathbf{e}_q\}} \mathbf{1}_{\{T_0>\mathbf{e}_q\}} \right)
&=& \int_0^\infty\mathbb{E}_x \left( \mathbf{1}_\Lambda \mathbf{1}_{\{t<T_0\}} \mathbf{1}_{\{T_0>s\}} \right)  \mathbf{1}_{\{t<s\}}qe^{-qs} ds \\
&=& \int_0^\infty\mathbb{E}_x \left( \mathbf{1}_\Lambda \mathbf{1}_{\{t<T_0\}}  \mathbb{E}_x \left( \mathbf{1}_{\{T_0>s\}} \circ \theta_t \mid \mathcal{F}_t \right) \right) \mathbf{1}_{\{t<s\}} qe^{-qs} ds \\
&=& \int_0^\infty \mathbb{E}_x \left( \mathbf{1}_\Lambda \mathbf{1}_{\{t<T_0\wedge s\}} \mathbb{P}_{X_t}(T_0>s) \right) qe^{-qs} ds \\
&=& \mathbb{E}_x \left( \mathbf{1}_\Lambda \mathbf{1}_{\{t<T_0\wedge \mathbf{e}_q\}} \mathbb{P}_{X_t}(T_0>\mathbf{e}_q) \right)\\
&=& n(\zeta>\mathbf{e}_q)\mathbb{E}_x \left( \mathbf{1}_\Lambda \mathbf{1}_{\{t<T_0\wedge \mathbf{e}_q\}} h_q(X_t)\right) \\
&=& \frac{1}{h_q(x)} \mathbb{E}_x \left( \mathbf{1}_\Lambda \mathbf{1}_{\{t<T_0\wedge \mathbf{e}_q\}} h_q(X_t)\right) \mathbb{P}_x(T_0>\mathbf{e}_q).
\end{eqnarray*}
The latter shows that for $\Lambda\in \mathcal{F}_t$,
\begin{equation} \label{equ3636}
\mathbb{P}_x(\Lambda, t<\mathbf{e}_q \mid T_0 > \mathbf{e}_q) = \frac{1}{h_q(x)} \mathbb{E}_x \left( \mathbf{1}_\Lambda h_q(X_t) \mathbf{1}_{\{t<T_0\wedge \mathbf{e}_q\}} \right).
\end{equation}
Now, recall that $h_q(x)\leq h_q^\ast(x) \leq h^\ast(x)$, $q>0$, $x\in\mathbb{R}$. Thus, 
\begin{equation*}
\mathbf{1}_{\{t<T_0\wedge \mathbf{e}_q\}} h_q(X_t) \leq \mathbf{1}_{\{t<T_0\}} h^\ast(X_t) \quad \mathrm{a.s.}
\end{equation*}
Furthermore, by Lemma \ref{excessiveofhast1}, $\mathbb{E}_x(h^\ast(X_t), t<T_0)$ is finite. Then, letting $q\rightarrow 0$, with the help of the dominated convergence theorem in (\ref{equ3636}), we obtain the desired result. 
\end{proof}

\begin{proof}[Proof of Theorem \ref{theo5}]
For every $s>0$, we consider $d_s=\inf\{u>s: X_u=0\}$, $g_s=\sup\{u\leq s: X_u=0\}$ and $G=\{g_u: g_u\neq d_u, u>0\}$. By definition, for every $q>0$, $\Lambda\in \mathcal{F}_t$, we have
\begin{eqnarray*}
\mathbb{P}^{\mathbf{e}_q} (\Lambda, t<\zeta) &=& \mathbb{E} (\mathbf{1}_\Lambda \circ k_{\mathbf{e}_q-g_{\mathbf{e}_q}} \circ \theta_{g_{\mathbf{e}_q}} \mathbf{1}_{\{t<\mathbf{e}_q-g_{\mathbf{e}_q}\}} ) \\
&=& \mathbb{E} \left( \int_0^\infty \mathbf{1}_\Lambda \circ k_{u-g_u} \circ \theta_{g_u} \mathbf{1}_{\{t<u-g_u\}} qe^{-qu}du \right) \\
&=& \mathbb{E} \left( \sum_{s\in G} e^{-qs} \int_s^{d_s} qe^{-q(u-s)} \mathbf{1}_\Lambda \circ k_{u-s} \circ \theta_s \mathbf{1}_{\{t<u-s\}} du \right).
\end{eqnarray*}
Now, using the compensation formula in excursion theory (see e.g. \cite{Bertoin}, \cite{Maisonneuve75}) and the Markov property of $n$, we obtain
\begin{align*}
\mathbb{E} \left( \sum_{s\in G} e^{-qs} \int_s^{d_s} qe^{-q(u-s)} \mathbf{1}_\Lambda \circ k_{u-s} \circ \theta_s \mathbf{1}_{\{t<u-s\}} du \right)
&=\mathbb{E}\left(\int_0^\infty e^{-qs}dL_s \right) n(\mathbf{1}_\Lambda \mathbf{1}_{\{t<\mathbf{e}_q<\zeta\}}) \\ 
&=  \mathbb{E}\left(\int_0^\infty e^{-qs}dL_s \right) n(\mathbf{1}_\Lambda \mathbb{P}_{X_t}(T_0>\mathbf{e}_q) \mathbf{1}_{\{t<\zeta\}}). 
\end{align*}
Using (\ref{equ1212}) and (\ref{equ3030}) we deduce
\begin{equation*}
\mathbb{E}\left(\int_0^\infty e^{-qs}dL_s \right) = u_q(0) = \frac{1}{n(\zeta>\mathbf{e}_q)}.
\end{equation*}
Thus, we obtain
\begin{equation} \label{equ3737}
\mathbb{P}^{\mathbf{e}_q} (\Lambda, t<\zeta) = \frac{ n(\mathbf{1}_\Lambda \mathbf{1}_{\{t<\mathbf{e}_q<\zeta\}}) }{ n(\zeta>\mathbf{e}_q) } = n( \mathbf{1}_\Lambda h_q(X_t) \mathbf{1}_{\{t<\zeta\}} ).
\end{equation}
First, we will prove that
  \[
  \lim_{q\to 0} \mathbb{P}^{\mathbf{e}_q} (t<\zeta) = 1.
  \]
The first equality of (\ref{equ3737}) can be written as follows
\begin{eqnarray*}
\mathbb{P}^{\mathbf{e}_q} (t<\zeta) &=& \frac{ n(t<\mathbf{e}_q<\zeta) }{ n(\zeta>\mathbf{e}_q) } \\
&=&  \frac{ n(\zeta>\mathbf{e}_q, \mathbf{e}_q > t) }{ n(\zeta>\mathbf{e}_q) } \\
&=&  \frac{ n(\zeta>\mathbf{e}_q) }{ n(\zeta>\mathbf{e}_q) } - \frac{ n(\zeta>\mathbf{e}_q, \mathbf{e}_q \leq t) }{ n(\zeta>\mathbf{e}_q) } \\
&=& 1 - \frac{ q }{ n(\zeta>\mathbf{e}_q) } n\left( \frac{ 1- e^{-q (t\wedge \zeta)}}{ q} \right).
\end{eqnarray*}
By (\ref{equ1212}) and Lemma \ref{lemmaap1} (i),
  \[
  \lim_{q\to 0} \frac{ q }{ n(\zeta>\mathbf{e}_q) } = \lim_{q\to 0} qu_q(0) = 0.
  \]
Using the inequality $1-e^{-x}\leq x$, for $x>0$ and since $n(t\wedge \zeta)<\infty$, the dominated convergence theorem implies
  \[ 
 \lim_{q\to 0} n\left( \frac{ 1- e^{-q (t\wedge \zeta)}}{ q} \right) = n(t\wedge \zeta) < \infty.
  \]
Thus, we conclude   
\[
  \lim_{q\to 0} \mathbb{P}^{\mathbf{e}_q} (t<\zeta) = 1-\lim_{q\to 0} \frac{ q }{ n(\zeta>\mathbf{e}_q) } n\left( \frac{ 1- e^{-q (t\wedge \zeta)}}{ q} \right) = 1.
  \]
Now, we prove that
  $$\lim_{q\rightarrow 0} \mathbb{P}^{\mathbf{e}_q} (\Lambda, t<\zeta) = n(\mathbf{1}_\Lambda h(X_t) \mathbf{1}_{\{t<\zeta\}}).$$
Fatou's lemma and (\ref{equ3737}) imply that for any $t>0$ and $\Lambda\in \mathcal{F}_t$
  \[
  \liminf_{q\to 0} \mathbb{P}^{\mathbf{e}_q}(\Lambda, t<\zeta) \geq n(\mathbf{1}_\Lambda h(X_t) \mathbf{1}_{\{t<\zeta\}}).
  \]
Furthermore, since $n(h(X_t), t<\zeta)= 1$ for all $t>0$ and $\mathbb{P}^{\mathbf{e}_q} (t<\zeta) \to 1$ as $q\to 0$, it follows that
\begin{eqnarray*}
\limsup_{q\to 0}  \mathbb{P}^{\mathbf{e}_q}(\Lambda, t<\zeta) &=& \limsup_{q\to 0}  \mathbb{P}^{\mathbf{e}_q}(t<\zeta) - \liminf_{q\to 0}  \mathbb{P}^{\mathbf{e}_q}(\Lambda^c, t<\zeta) \\
&=& 1- \liminf_{q\to 0}  \mathbb{P}^{\mathbf{e}_q}(\Lambda^c, t<\zeta) \\
&\leq& 1- n(\mathbf{1}_{\Lambda^c} h(X_t) \mathbf{1}_{\{t<\zeta\}}) \\
&=& n(\mathbf{1}_{\Lambda} h(X_t) \mathbf{1}_{\{t<\zeta\}}).
\end{eqnarray*}
Putting the pieces together we conclude that
  $$\lim_{q\rightarrow 0} \mathbb{P}^{\mathbf{e}_q} (\Lambda, t<\zeta) = n(\mathbf{1}_\Lambda h(X_t) \mathbf{1}_{\{t<\zeta\}}).$$
\end{proof}

We will now establish the claims in Proposition~\ref{transitorityofhprocess} and Proposition~\ref{relexcmeasure}. For that aim, let $U_q^\updownarrow$ be the $q$-resolvent for the process $X^\updownarrow=(X,\mathbb{P}_x^\updownarrow)_{x\in\mathcal{H}_0}$, with $U^\updownarrow = U_0^\updownarrow$. To prove that $X^\updownarrow$ is transient, we compute the density of $U^\updownarrow$. For $x,y\in\mathcal{H}$ and $q>0$, we have
\begin{equation} \label{equ3838}
u_q^\updownarrow(x,y) = \frac{h(y)}{h(x)} u_q^0(x,y).
\end{equation}
From (\ref{equ1111}) it can be deduced that for $y\in \mathcal{H}$, $q>0$,
\begin{eqnarray} \nonumber
u_q^\updownarrow(0,y)dy &=& \int_0^\infty e^{-qt} n(h(X_t)\mathbf{1}_{\{X_t\in dy\}}, t<\zeta) dt \\ \nonumber
&=& h(y)\widehat{\mathbb{E}}_y[e^{-qT_0}]dy \\ \label{equ3939}
&=& h(y)\frac{u_q(y)}{u_q(0)}dy.
\end{eqnarray}
Finally, by Theorem \ref{mainresult} (ii), $u_q^\updownarrow(x,0) = 0$, for all $x\in \mathcal{H}_0$. Thus, from the above equations, the density of $U^\updownarrow$ can be obtained. This is stated in the following lemma.

\begin{lemma} \label{densityofuaz}
Let $u_0^\updownarrow(x,y) = \lim_{q\rightarrow 0} u_q^\updownarrow(x,y)$, $x,y\in \mathcal{H}_0$. Then $u_0^\updownarrow(x,0) = 0$, for all $x\in \mathcal{H}_0$,
\begin{equation} \label{equ4040}
0\leq u_0^\updownarrow(x,y) = \frac{h(y)}{h(x)}[ h(x) + h(-y) - h(x-y) - \kappa h(x)h(-y)], \quad x\in \mathcal{H}, y\in \mathcal{H},
\end{equation}
and for $y\in \mathcal{H}$,
\begin{equation} \label{equ4141}
u_0^\updownarrow(0,y) = h(y)(1-\kappa h(-y)) = h^\ast(y)-h(-y).
\end{equation}
\end{lemma}
\begin{proof}
An easy computation gives
\begin{equation*}
\frac{u_q(-x)u_q(y)}{u_q(0)} = \frac{h_q(x)h_q(-y)}{u_q(0)} - h_q(x) - h_q(-y) + u_q(0), \quad  x\in \mathcal{H}, y\in \mathcal{H}.
\end{equation*}
Using this and (\ref{equ888}) it follows
\begin{equation} \label{equ4242}
u_q^0(x,y) = h_q(x) + h_q(-y) - h_q(x-y) - \frac{h_q(x)h_q(-y)}{u_q(0)}, \quad x\in \mathcal{H}, y\in \mathcal{H}.
\end{equation}
Letting $q\rightarrow 0$ in (\ref{equ3838}) and with help of (\ref{equ4242}), we obtain (\ref{equ4040}). The first equality in (\ref{equ4141}) is obtained from (\ref{equ3939}) recalling that for all $y$, $\lim_{q\rightarrow 0}[u_q(y)/u_q(0)]= \lim_{q\rightarrow 0}[1-(u_q(0))^{-1}h_q(-y)] = 1-\kappa h(-y)$. The second one follows from (\ref{equ3434}).
\end{proof}

\begin{remark} \label{remhast}
Note that from (\ref{equ3232}) and (\ref{equ4242}) we have $u_q^\updownarrow(x,x) = u_q^0(x,x) = h_q^\ast(x)$, $x\in \mathcal{H}$, which implies $u_0^\updownarrow(x,x)=h^\ast(x)$, $x\in \mathcal{H}$.  
\end{remark}

\begin{proof} [Proof of Proposition \ref{transitorityofhprocess}]
To obtain the transiency property of $X^\updownarrow$, we use Theorem 3.7.2 in \cite{Chung-Walsh05}, which states the following. If the conditions:
\begin{enumerate}
\item[(i)] $U^\updownarrow g$ is lower semi-continuous, for any non negative function $g$ with compact support; 

\item[(ii)] there exists a non negative function $f$ such that $0<U^\updownarrow f<\infty$ on $\mathcal{H}_0$;
\end{enumerate}
are satisfied, then the process $X^\updownarrow$ is transient.

Since $h$ is continuous, from Lemma \ref{densityofuaz} it follows $\lim_{x\rightarrow x'} u_0^\updownarrow(x,y) = u_0^\updownarrow(x',y)$, for all $y\in\mathcal{H}_0$. Let $g$ be a non negative function with compact support $K$. By Fatou's lemma, we have 
\begin{equation*}
\liminf_{x\rightarrow x'} \int_K g(y) u_0^\updownarrow(x,y) dy \geq \int_\mathbb{R} g(y)\liminf_{x\rightarrow x'} [u_0^\updownarrow(x,y)\mathbf{1}_K] dy 
= \int_K g(y)u_0^\updownarrow(x',y)dy.
\end{equation*}
This shows that for any $g$ non negative with compact support, the function
\begin{equation*}
x \longmapsto \int_\mathbb{R} g(y)u_0^\updownarrow(x,y) dy
\end{equation*}
is lower semi-continuous. Thus, condition (i) is satisfied.

Now, we will find a non negative function $f:\mathbb{R}\rightarrow \mathbb{R}^+$ such that $0<U^\updownarrow f(x)<\infty$. Let $f$ be given by
  $$f(y) = \left\{
  \begin{tabular}{ll}
  $\dfrac{1}{[h^\ast(1)]^2}$, & $|y| \leq 1$, \\
   & \\
  $\dfrac{1}{y^2[h^\ast(y)]^2}$, & $|y|>1$.
  \end{tabular}
  \right.
  $$
Since $f$ is continuous and $\lim_{|x|\rightarrow \infty} h^\ast(x)=\kappa^{-1}$, then $f$, $fh^\ast$ and $f(h^\ast)^2$ are integrable with respect to Lebesgue measure. On the other hand, $h$ is dominated by the symmetric function $h^\ast$, then the integrability of $fh^\ast$ and $f(h^\ast)^2$ imply 
  $$\int_{\mathbb{R}} f(y)h(y)dy<\infty, \quad \int_{\mathbb{R}} f(y)h(y)h(-y)dy < \infty.$$
Furthermore, since $h$ is subadditive and $f$ is symmetric, it follows
  $$\int_{\mathbb{R}} f(y) h(x-y) dy \leq \int_{\mathbb{R}} f(y)h(x) dy + \int_{\mathbb{R}} f(y)h(y)dy< \infty.$$
Thus, for $x\in \mathcal{H}$,
\begin{equation*}
U^\updownarrow f (x) = \int_{\mathbb{R}} f(y) u_0^\updownarrow(x,y) dy < \infty.
\end{equation*}
Finally, using the symmetry of $f$ and $h^\ast$ we obtain
\begin{equation*}
U^\updownarrow f(0) = \int_{\mathbb{R}} f(y)u^\updownarrow(0,y) dy = \int_{\mathbb{R}} f(y)(h^\ast(y)-h(y)) dy < \infty.
\end{equation*}
This concludes the proof.
\end{proof}

The Lemma \ref{indoflpaz} below states that any $x\in \mathcal{H}$ is regular for itself under $\mathbb{P}_x^\updownarrow$. The latter implies the existence of a continuous local time at point $x\in \mathcal{H}$ for the process $(X, \mathbb{P}_x^\updownarrow)$, see \cite[Theorem 3.12, p. 216]{Blumenthal-Getoor68}. We will denote by $(L^\updownarrow(x,t), t\geq 0)$ the local time at point $x$ aforementioned and by $\uptau^\updownarrow(x,t)$ the right continuous inverse of $L^\updownarrow(x,t)$, i.e.,
\begin{equation*}
\uptau^\updownarrow(x,t) = \inf\{s>0: L^\updownarrow(x,s) > t \}, \quad t\geq 0.
\end{equation*}
It is well known that $(\uptau^\updownarrow(x,t), t\geq 0)$ is a subordinator killed at an exponential random time independent of $\uptau^\updownarrow(x,\cdot)$ with Laplace exponent $\Phi^{x,\updownarrow}$ satisfying  
\begin{equation} \label{equ4343}
\mathbb{E}_x(e^{-q\uptau^\updownarrow(x,t)}) = e^{-t\Phi^{x,\updownarrow}(q)} = e^{-t/u_q^\updownarrow(x,x)}, \quad t\geq 0,
\end{equation}  
see e.g. \cite[Theorem 3.17, p. 218]{Blumenthal-Getoor68}. Furthermore, using the compensation formula in excursion theory it can be established that for any $q>0$,
\begin{equation} \label{equ4444}
\begin{split}
&\Phi^{x,\updownarrow}(q) = \frac{1}{u_q^\updownarrow(x,x)} =  n_x^\updownarrow ( \zeta>\mathbf{e}_q ) + a^xq \\ &= n_x^\updownarrow(\zeta=\infty) + \int_0^\infty (1-e^{-qt}) n_x^\updownarrow(\zeta\in dt) + a^xq,
\end{split}
\end{equation}
where $a^x$ satisfies
\begin{equation} \label{equ4545}
\int_0^t \mathbf{1}_{\{X_s=x\}} ds= a^xL^\updownarrow(x,t).
\end{equation}
By Remark \ref{remhast}, $\lim_{q\rightarrow 0}u_q^\updownarrow(x,x) = h^\ast(x)>0$, for $x\in \mathcal{H}$, then $(\uptau^\updownarrow(x,t), t\geq 0)$ is a subordinator killed at an exponential time with parameter $1/h^\ast(x)>0$. This also confirms the transiency of $(X, \mathbb{P}_x^\updownarrow)$, since by (\ref{equ4444}), there exists an excursion of infinite length.

To state the following lemma, we introduce additional notation. For every $x\in \mathbb{R}$, define $d_s^x = \inf\{u>s: X_t=x \}$, $g_s^x = \sup\{u\leq s: X_t=x\}$ and $G^x = \{g_u^x: g_u^x\neq d_u^x, u>0\}$.

\begin{lemma} \label{indoflpaz}
\begin{enumerate}
\item[(i)] For $x\in \mathcal{H}$, $x$ is regular for itself for $(X,\mathbb{P}_x^\updownarrow)$.

\item[(ii)] Let $\mathbf{e}_q$ be an exponential random variable with parameter $q>0$, independent of $(X, (\mathbb{P}_x^\updownarrow)_{ x\in \mathcal{H}})$. Then, for every $x\in \mathcal{H}$, the processes $(X_u, u<g_{\mathbf{e}_q}^x)$ and $X\circ k_{\mathbf{e}_q - g_{\mathbf{e}_q}^x}\circ \theta_{g_{\mathbf{e}_q}^x}$ are $\mathbb{P}_x^\updownarrow$ independent. Furthermore, their laws are characterized as follows: let $F$ and $H$ be measurable and bounded functionals, then
\begin{equation} \label{eqaux2-2}
\mathbb{E}_x^\updownarrow \left( F(X_u, u<g_{\mathbf{e}_q}^x) \right) = \mathbb{E}_x^\updownarrow \left( \int_0^\infty F(X_u,u<s) e^{-qs} dL^\updownarrow(x,s) \right) \left[ n_x^\updownarrow ( \zeta>\mathbf{e}_q ) + a^xq \right]
\end{equation}
and
\begin{equation} \label{equ4848} 
\mathbb{E}_x^\updownarrow \left(  H(X\circ k_{\mathbf{e}_q - g_{\mathbf{e}_q}^x}\circ \theta_{g_{\mathbf{e}_q}^x}) \right) = u_q^\updownarrow(x,x) \left[ n_x^\updownarrow \left( \int_0^\zeta H(\upepsilon_u, u<t) qe^{-qt} dt \right) + a^xqH(\bar{x}) \right],
\end{equation} 
where $a^x$ is the constant in (\ref{equ4545}).
\end{enumerate}
\end{lemma}
\begin{proof}
Let $x\in \mathcal{H}$. By Fatou's lemma and the definition of $\mathbb{P}_x^\updownarrow$, we have
\begin{eqnarray*}
\mathbb{P}_x^\updownarrow(T_x=0) &=& \liminf_{t\rightarrow 0} \mathbb{P}_x^\updownarrow(T_x\leq t) \\
&\geq& \frac{1}{h(x)} \mathbb{E}_x\left( \liminf_{t\rightarrow 0} \mathbf{1}_{\{T_x\leq t<T_0\}} h(X_t) \right) \\
&=& \frac{1}{h(x)} \mathbb{E}_x\left( \mathbf{1}_{\{T_x=0\}} \mathbf{1}_{\{T_0>0\}} h(X_0) \right) \\
&=& 1,
\end{eqnarray*}
where the latter equality was obtained using the facts that $\{x\}$ is regular for itself under $\mathbb{P}_x$ and $\mathbb{P}_x(T_0>0)=1$. This proves (i).

Before we prove (ii), we recall the following. Since $\uptau^\updownarrow(x,\cdot)$ is the inverse of the local time $(L^\updownarrow(x,t), t \geq 0)$ with Laplace exponent given by (\ref{equ4343}), then 
\begin{equation} \label{equ4646}
\mathbb{E}_x^\updownarrow \left( \int_0^\infty e^{-qt} dL^\updownarrow(x,t) \right) = \mathbb{E}_x^\updownarrow \left( \int_0^\infty e^{-q\uptau^\updownarrow(x,t)} dt \right) = \int_0^\infty \mathbb{E}_x^\updownarrow ( e^{-q\uptau^\updownarrow(x,t)} ) dt = u_q^\updownarrow(x,x). 
\end{equation}
We will denote $\bar{x}$ the path which is identically equal to $x$ and with lifetime zero. Thus, for $F$ and $H$ measurable and bounded functionals, using the compensation formula in excursion theory (see e.g. \cite{Bertoin}, \cite{Maisonneuve75}), it follows
\begin{align}
\nonumber
&\mathbb{E}_x^\updownarrow \left( F(X_u, u<g_{\mathbf{e}_q}^x) H(X\circ k_{\mathbf{e}_q - g_{\mathbf{e}_q}^x}\circ \theta_{g_{\mathbf{e}_q}^x}) \right) \\ \nonumber
&= \mathbb{E}_x^\updownarrow \left( \sum_{s\in G^x} F(X_u,u<s) e^{-qs} \int_s^{d_s} H(X\circ k_{t-s} \circ \theta_s) qe^{-q(t-s)} dt \right) \\ \nonumber
&\quad + \mathbb{E}_x^\updownarrow \left( \int_0^\infty F(X_u,u<t )H(\bar{x}) qe^{-qt} \mathbf{1}_{\{X_t=x\}} dt \right) \\ \label{equ4747}
&= \mathbb{E}_x^\updownarrow \left( \int_0^\infty F(X_u,u<s) e^{-qs} dL^\updownarrow(x,s) \right) \left[ n_x^\updownarrow \left( \int_0^\zeta H(\upepsilon_u, u<t) qe^{-qt} dt \right) + a^xqH(\bar{x}) \right], 
\end{align}
where $a^x$ is the constant in (\ref{equ4545}). Taking $H\equiv 1$ in (\ref{equ4747}), it follows
\begin{equation*}
\mathbb{E}_x^\updownarrow \left( F(X_u, u<g_{\mathbf{e}_q}^x) \right) = \mathbb{E}_x^\updownarrow \left( \int_0^\infty F(X_u,u<s) e^{-qs} dL^\updownarrow(x,s) \right) \left[ n_x^\updownarrow ( \zeta>\mathbf{e}_q ) + a^xq \right].
\end{equation*}
In the same way, if we take $F\equiv 1$ in (\ref{equ4747}) and we use (\ref{equ4646}), we can obtain 
\begin{equation*} 
\mathbb{E}_x^\updownarrow \left(  H(X\circ k_{\mathbf{e}_q - g_{\mathbf{e}_q}^x}\circ \theta_{g_{\mathbf{e}_q}^x}) \right) = u_q^\updownarrow(x,x) \left[ n_x^\updownarrow \left( \int_0^\zeta H(\upepsilon_u, u<t) qe^{-qt} dt \right) + a^xqH(\bar{x}) \right].
\end{equation*}
The latter two displays are (\ref{eqaux2-2}) and (\ref{equ4848}), respectively.

Finally, by (\ref{equ4444}), $u_q^\updownarrow(x,x) = [ n_x^\updownarrow ( \zeta>\mathbf{e}_q ) + a^xq ]^{-1}$. Using this fact, (\ref{eqaux2-2}) and (\ref{equ4848}), we conclude 
  $$\mathbb{E}_x^\updownarrow \left( F(X_u, u<g_{\mathbf{e}_q}^x) H(X\circ k_{\mathbf{e}_q - g_{\mathbf{e}_q}^x}\circ \theta_{g_{\mathbf{e}_q}^x}) \right) = \mathbb{E}_x^\updownarrow \left( F(X_u, u<g_{\mathbf{e}_q}^x) \right) \mathbb{E}_x^\updownarrow \left(  H(X\circ k_{\mathbf{e}_q - g_{\mathbf{e}_q}^x}\circ \theta_{g_{\mathbf{e}_q}^x}) \right).$$
This shows the independence property in (ii).
\end{proof}

Now, we will prove that the drift coefficient in (\ref{equ4444}) does not depend on $x$, and is equal to $\delta$.

\begin{lemma} \label{axanddelta}
Let $\delta$ be the drift coefficient of the inverse local time at the point zero for the L\'evy process $(X,\mathbb{P})$. Then for all $x\in \mathcal{H}$, $\mathbb{P}_x^\updownarrow$-a.s., $\int_0^t \mathbf{1}_{\{X_s=0\}} ds = \delta L^\updownarrow(x,t)$. That is, $a^x=\delta$, for all $x\in \mathcal{H}$.
\end{lemma}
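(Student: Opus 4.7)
The plan is to compute the Laplace transform
$$I(q) := \mathbb{E}_x^\updownarrow\!\left(\int_0^\infty e^{-qt} \mathbf{1}_{\{X_t=x\}}\, dt\right), \qquad x\neq 0,\ q>0,$$
in two different ways; matching the resulting expressions and cancelling the common factor $h_q^\ast(x)$, which is strictly positive for $x \neq 0$, will force $a^x = \delta$.

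On one hand, the defining relation (\ref{eq4545}) combined with (\ref{eq4646}) and Remark~\ref{remhast} gives directly
$$I(q) = a^x\, \mathbb{E}_x^\updownarrow\!\left(\int_0^\infty e^{-qt}\, dL^\updownarrow(x,t)\right) = a^x u_q^\updownarrow(x,x) = a^x h_q^\ast(x).$$
On the other hand, Fubini together with the $h$-transform definition of $\mathbb{P}_x^\updownarrow$---noting that the Radon--Nikodym factor $h(X_t)/h(x)$ equals $1$ identically on $\{X_t=x\}$---yields
$$I(q) = \int_0^\infty e^{-qt}\mathbb{P}_x(X_t=x,\,t<T_0)\, dt = \mathbb{E}_x\!\left(\int_0^{T_0} e^{-qt} \mathbf{1}_{\{X_t=x\}}\, dt\right).$$
I then invoke the standard pathwise L\'evy identity $\int_0^t \mathbf{1}_{\{X_s=x\}}\, ds = \delta L^x_t$ under $\mathbb{P}_x$ (the translation to $x$ of the relation defining the drift of the inverse local time at $0$), and perform the spatial shift $X \mapsto X-x$, which maps $L^x$ under $\mathbb{P}_x$ to $L$ under $\mathbb{P}$ and $T_0$ to $T_{-x}$:
$$I(q) = \delta\, \mathbb{E}\!\left(\int_0^{T_{-x}} e^{-qt}\, dL_t\right) = \delta\, h_q^\ast(-x) = \delta\, h_q^\ast(x),$$
invoking the probabilistic representation of $h_q^\ast$ given at the start of Section~\ref{auxfunction} and its symmetry.

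Equating the two expressions yields $a^x h_q^\ast(x) = \delta h_q^\ast(x)$, and since $h_q^\ast(x) > 0$ for $x\neq 0$ (by the representation $h_q^\ast(x) = \mathbb{E}(\int_0^{T_x} e^{-qt}\, dL_t)$ combined with $T_x > 0$ a.s.\ and the non-triviality of $L$ near time $0$ by regularity), we conclude $a^x = \delta$. The only subtle point is keeping the three local times consistently normalized: $L$ at $0$ under $\mathbb{P}$, $L^x$ at $x$ under $\mathbb{P}_x$, and $L^\updownarrow(x,\cdot)$ at $x$ under $\mathbb{P}_x^\updownarrow$ are, a priori, defined with three different normalizations. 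Translation invariance of $(X,\mathbb{P})$ identifies $L^x$ with $L$; the bridge to $L^\updownarrow(x,\cdot)$ is supplied precisely by the equality $u_q^\updownarrow(x,x) = h_q^\ast(x)$ of Remark~\ref{remhast}, which is exactly what ensures that the factor $h_q^\ast(x)$ appears on both sides of the comparison and can be cancelled cleanly.
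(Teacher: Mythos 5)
Your proof is correct and follows essentially the same route as the paper: both compute $\mathbb{E}_x^\updownarrow\bigl(\int_0^\infty e^{-qt}\mathbf{1}_{\{X_t=x\}}\,dt\bigr)$ once via the definition of $a^x$ and (\ref{eq4646}), and once by unwinding the $h$-transform, translating to $\mathbb{P}$, and using the symmetry of $h_q^\ast$ together with $u_q^\updownarrow(x,x)=h_q^\ast(x)$. Your explicit remarks on the normalization of the three local times and on the strict positivity of $h_q^\ast(x)$ for $x\neq 0$ are welcome additions, but the argument is the paper's.
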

\begin{proof}
If both $\delta$, $a^x$ are zero, the claim holds. Suppose that $a^x\neq 0$. Using (\ref{equ4646}), the definition of $a^x$ and $\mathbb{P}_x^\updownarrow$, we obtain
\begin{eqnarray} \nonumber
a^xu_q^\updownarrow(x,x) &=&  \mathbb{E}_x^\updownarrow \left( \int_0^\infty e^{-qt} d[a^xL^\updownarrow(x,t)] \right) \\ \nonumber
&=& \int_0^\infty \mathbb{E}_x^\updownarrow \left( \mathbf{1}_{\{X_t=x\}} \right) e^{-qt} dt \\ \nonumber
&=& \int_0^\infty \frac{1}{h(x)}\mathbb{E}_x \left( \mathbf{1}_{\{X_t=x\}} h(X_t) \mathbf{1}_{\{T_0>t\}} \right) e^{-qt} dt \\ \label{equ4949}
&=& \mathbb{E}_x \left( \int_0^\infty \mathbf{1}_{\{T_0>t\}} e^{-qt} \mathbf{1}_{\{X_t=x\}} dt \right). 
\end{eqnarray}
Using that $(X,\mathbb{P}_x)$ is equal in distribution to $(X+x,\mathbb{P})$, the definition of $\delta$ and the symmetry of $h_q^\ast(x)$, it follows that the right-hand side in (\ref{equ4949}) is   
\begin{equation*}
\mathbb{E} \left( \int_0^\infty \mathbf{1}_{\{T_{-x}>t\}} e^{-qt} \mathbf{1}_{\{X_t=0\}} dt \right) =  \delta \mathbb{E} \left( \int_0^\infty \mathbf{1}_{\{T_{-x}>t\}} e^{-qt} dL_t \right) = \delta h_q^\ast(x). 
\end{equation*}
To conclude the proof recall that $h_q^\ast(x) = u_q^\updownarrow(x,x)$. 
\end{proof}

\begin{proof}[Proof of Proposition \ref{relexcmeasure}]
Let $H:\mathcal{D}^0\rightarrow \mathbb{R}$ be a bounded and measurable functional. To simplify we write $X^q$ for the path $X\circ k_{\mathbf{e}_q - g_{\mathbf{e}_q}^x}\circ \theta_{g_{\mathbf{e}_q}^x}$. Using the definition of $\mathbb{P}_x^\updownarrow$, we obtain 
\begin{equation} \label{equ5050}
h(x) \mathbb{E}_x^\updownarrow \left( H(X^q) \right) = \mathbb{E}\left( \int_0^\infty H((X+x)\circ k_{t-g_t} \circ \theta_{g_t}) h(X_t+x) \mathbf{1}_{\{T_{-x}>t\}} qe^{-qt} dt\right). 
\end{equation}
We note that $\mathbf{1}_{\{T_{-x}>t\}} = \mathbf{1}_{\{T_{-x}\circ \theta_{g_t}>t-g_t\}} \mathbf{1}_{\{T_{-x}>g_t\}}$ and $h(X_t+x) = h((X_{t-g_t}+x)\circ \theta_{g_t})$. Then, with the help of the compensation formula in excursion theory (\cite{Bertoin}, \cite{Maisonneuve75}), the right-hand side in (\ref{equ5050}) can be written as
\begin{align} \nonumber
\mathbb{E} & \left(  \int_0^\infty \mathbf{1}_{\{T_{-x}>g_t\}} H((X+x)\circ k_{t-g_t} \circ \theta_{g_t}) h((X_{t-g_t}+x)\circ \theta_{g_t}) \mathbf{1}_{\{T_{-x}\circ \theta_{g_t}>t-g_t\}} qe^{-qt} dt\right) \\ \nonumber
&= \mathbb{E} \left( \sum_{s\in G} \mathbf{1}_{\{T_{-x}>s\}} \int_s^{d_s} H((X+x)\circ k_{t-s} \circ \theta_s) h((X_{t-s}+x)\circ \theta_s) \mathbf{1}_{\{T_{-x}\circ \theta_s>t-s\}} qe^{-qt} dt\right) \\ \nonumber
&\quad + \mathbb{E} \left( \int_0^\infty \mathbf{1}_{\{T_{-x}>t\}} H(\bar{x}) h(x) qe^{-qt} \mathbf{1}_{\{X_t=0\}} dt\right) \\ \nonumber
&= \mathbb{E}\left( \int_0^\infty \mathbf{1}_{\{T_{-x}>t\}} e^{-qt}dL_t \right) n \left( \int_0^\zeta H(\upepsilon_u+x, u<t)h(X_t+x)\mathbf{1}_{\{T_{-x}>t\}} qe^{-qt} dt \right)  \\ \nonumber
&\quad + q\delta H(\bar{x})h(x) \mathbb{E}\left( \int_0^\infty \mathbf{1}_{\{T_{-x}>t\}} e^{-qt}dL_t \right) \\ \label{equ5151}  
&= h_q^\ast(x) \left[ n \left( \int_0^\zeta H(\upepsilon_u+x, u<t) h(X_t+x)\mathbf{1}_{\{T_{-x}>t\}} qe^{-qt} dt \right) + q\delta H(\bar{x})h(x) \right],
\end{align}
where $\delta$ is such that $\int_0^t\mathbf{1}_{\{X_s=0\}} = \delta L_t$ under $\mathbb{P}$. Using Lemma \ref{axanddelta} and $h^\ast(x)=u_q^\updownarrow(x,x)$ in (\ref{equ5151}), we verify 
\begin{equation} \label{equ5252}
\mathbb{E}_x^\updownarrow \left( H(X^q) \right)= u_q^\updownarrow(x,x) \left[ \frac{1}{h(x)} n \left( \int_0^\zeta H(\upepsilon_u+x, u<t) h(X_t+x)\mathbf{1}_{\{T_{\{-x\}}>t\}} qe^{-qt} dt \right) + a^xqH(\bar{x})\right].
\end{equation}
Comparing (\ref{equ5252}) with (\ref{equ4848}), the result follows.  
\end{proof}

\section{Two examples} \label{examples}

The expression (\ref{equ2121}) in Theorem \ref{propofh} (i) allows us to compute explicitly the function $h$ in the particular case when $(X,\mathbb{P})$ is an $\alpha$-stable process.

\begin{example}
Suppose that $(X,\mathbb{P})$ is an $\alpha$-stable process. Then, $(X,\mathbb{P})$ satisfies \textbf{H.1} and \textbf{H.2} if and only if $\alpha\in (1,2]$. It is well known that the resolvent density of Brownian motion is $u_q(x) = (\sqrt{2q})^{-1}e^{-\sqrt{2q}|x|}$, hence $h(x)=\lim_{q\rightarrow 0} [u_q(0) - u_q(-x)] =|x|$. Now, let $\alpha \in (1,2)$. In this case the function $h$ takes the following form
\begin{equation} \label{halfa}
h(x) = K(\alpha)(1-\beta\sign(x))|x|^{\alpha-1},
\end{equation}
where
\begin{equation*}
K(\alpha) = \frac{\Gamma(2-\alpha)\sin(\alpha\pi/2)}{c\pi(\alpha-1)(1+\beta^2\tan^2(\alpha\pi/2))}
\end{equation*}
and
\begin{equation} \label{equ5353}
c = -\frac{(c^++c^-)\Gamma(2-\alpha)}{\alpha(\alpha-1)}\cos(\alpha\pi/2), \quad \beta = \frac{c^+-c^-}{c^++c^-}.
\end{equation}
Notice that if $\beta=1$ ($c^-=0$), then $h(x)=0$, for all $x\geq 0$, and if $\beta=-1$ ($c^+=0$), then $h(x)=0$, for all $x\leq 0$.

Before obtaining (\ref{halfa}) we will verify hypotheses \textbf{H.3} and {\bf H.4}, respectively. Recall that the characteristic exponent of $(X,\mathbb{P})$ can be written as
\begin{equation*}
\psi(\lambda) = c|\lambda|^\alpha(1-i\beta\sign(\lambda)\tan(\alpha\pi/2)),
\end{equation*}
where $c$ and $\beta$ are as in (\ref{equ5353}). Notice that {\bf H.3} is satisfied because the integral
  \[
  \int_\mathbb{R} \frac{1}{q+c|\lambda|^\alpha} d\lambda
  \]
is finite for all $q>0$. On the other hand, using the expression of $\psi(\lambda)$ we obtain
\begin{equation*}
Re\left( \frac{1-e^{i\lambda x}}{\psi(\lambda)} \right) = \frac{ 1-\cos(\lambda x) + \beta\tan(\alpha\pi/2)\sign(\lambda)\sin(\lambda x) }{ c|\lambda|^\alpha(1 + \beta^2\tan^2(\alpha\pi/2)) }.
\end{equation*}
From the latter equaility, we have
\begin{align*}
\left|  Re\left( \frac{1-e^{i\lambda }}{\psi(\lambda)} \right) \right| &\leq \frac{1}{c} \left( \frac{ 1-\cos(\lambda ) }{ |\lambda|^\alpha(1 + \beta^2\tan^2(\alpha\pi/2))} +  \frac{ |\beta\tan(\alpha\pi/2)||\sin(\lambda )| }{ |\lambda|^\alpha(1 + \beta^2\tan^2(\alpha\pi/2)) } \right) \\
&\leq \frac{1}{c} \left( \frac{ 2( 1\wedge |\lambda|^2) }{ |\lambda|^\alpha } + \frac{ 1 \wedge |\lambda| }{ |\lambda|^\alpha } \right).
\end{align*}
Now, since $1<\alpha<2$, 
  \[
  \int_\mathbb{R} \frac{  1\wedge |\lambda|^2 }{ |\lambda|^\alpha } d\lambda = 2 \int_0^\infty \frac{ 1\wedge \lambda^2  }{ \lambda^\alpha } d\lambda = \frac{4}{(3-\alpha)(\alpha-1)}
  \]
and
  \[
  \int_\mathbb{R} \frac{ 1 \wedge |\lambda| }{ |\lambda|^\alpha } d\lambda = 2 \int_0^\infty \frac{ 1 \wedge \lambda }{ \lambda^\alpha } d\lambda = \frac{2}{(2-\alpha)(\alpha-1)}
  \]
are finites. Thus, \textbf{H.4} is verified.

Next, we obtain (\ref{halfa}). Since the function $\sign(\lambda)\sin(\lambda x)$ as function of $\lambda$ is even, we have
\begin{equation*}
 \int_{-\infty}^\infty Re\left( \frac{1-e^{i\lambda x}}{\psi(\lambda)} \right) d\lambda = \frac{1}{c(1 + \beta^2\tan^2(\alpha\pi/2))}\left( 2\pi h^s(x) + 2\beta\tan(\alpha\pi/2) \int_0^\infty \frac{\sin(\lambda x)}{\lambda^\alpha} d\lambda \right),
\end{equation*}
where $h^s(x)$ is the function $h$ obtained in the symmetric case (see example 1.1 in \cite{Yano10}), namely
\begin{equation*}
h^s(x) = \frac{ \Gamma(2-\alpha) }{ \pi(\alpha-1) } \sin(\alpha\pi/2)|x|^{\alpha-1}.
\end{equation*}
On the other hand, with the help of formulae (14.18) of Lemma 14.11 in \cite{Sato}, we obtain
\begin{equation*}
\int_0^\infty \frac{\sin(\lambda x)}{\lambda^\alpha} d\lambda = \sign(x)|x|^{\alpha-1} \int_0^\infty \frac{\sin u}{u^\alpha} du = -\frac{ \Gamma(2-\alpha) }{ \alpha-1 } \cos(\alpha\pi/2) \sign(x)|x|^{\alpha-1}.
\end{equation*}
The latter equality implies (\ref{halfa}).

For reference, we point out that similar calculations are performed in \cite[Lemma 5.4]{Kogan-Marcus-Rosen11} to determine $u_{T_0}(x,y):=\mathbb{E}_x(L_{T_0}^y)$, where $L_t^y$ is the local time at the point $y$ for the process $(X, \mathbb{P}_x)$. The function $u_{T_0}(x,y)$ is related to $h^\ast$ through the formula $u_{T_0}(x,x)=\mathbb{E}_x(L_{T_0}^x)=\mathbb{E}(L_{T_{-x}}) = h^\ast(-x) = h^\ast(x)$.

\end{example}

Recall that $L_{T_x}$ is an exponential random variable with parameter $[h^\ast(x)]^{-1}$, then by (\ref{equ3434}), in the case when $(X,\mathbb{P})$ is an $\alpha$-stable process with $\alpha\in(1,2]$, $L_{T_x}$ is an exponential random variable with parameter $1/[2K(\alpha)|x|^{\alpha-1}]$.

As was mentioned in the above example, depending on if $\beta=1$ or $\beta=-1$, the function $h$ vanishes at some points different from zero. The case $\beta=-1$ corresponds to a spectrally negative $\alpha$-stable process. In the following example an expression for the function $h$ is obtained for spectrally negative L\'evy processes satisfying the hypothesis \textbf{H1} and \textbf{H2}. In \cite{Pardo-Perez-Rivero15} is used this example to give a detailed description of the excursion measure away from zero for spectrally negative L\'evy processes.

\begin{example} \label{spectrallyexample}
Let $(X,\mathbb{P})$ be a spectrally negative L\'evy process, i.e., the L\'evy measure of $(X,\mathbb{P})$ satisfies $\pi(0,\infty) = 0$. Suppose that {\bf H.1} and {\bf H.2} hold. Let $\Psi$ be the Laplace exponent of the process $(X,\mathbb{P})$, i.e., 
\begin{equation*}
\Psi(\lambda) = \log(\mathbb{E}[e^{\lambda X_1}]) = -\psi(-i\lambda) = -a\lambda + \frac{\sigma^2}{2}\lambda^2 + \int_{(-\infty, 0)} (e^{\lambda x} - 1 - \lambda x\mathbf{1}_{\{x > -1\}}) \pi(dx).
\end{equation*}
It is well known that $\Psi'(0+) = \mathbb{E}(X_1) \in [-\infty, \infty)$ determines the long run behaviour of $X$. To be precise, if $\Psi'(0+) > 0$ then $\lim_{t\to \infty} X_t=\infty$, if $\Psi'(0+)<0$ then $\lim_{t\to \infty} X_t = -\infty$, while if $\Psi'(0+) = 0$ the process $X$ oscillates.

Now, for $q\geq 0$, let $\Phi(q)$ be the largest root of the equation $\Psi(\lambda) = q$, i.e., 
\begin{equation*}
\Phi(q) = \sup\{\lambda\geq 0: \Psi(\lambda) = q\}.
\end{equation*}
For the spectrally negative L\'evy processes $(X,\mathbb{P})$ with Laplace exponent $\Psi$, we consider the $q$-scale functions $\{W^{(q)}, q\geq 0\}$, the family of functions satisfying the following: for every $q\geq 0$, $W^{(q)}(x)=0$, for $x<0$ and $W^{(q)}(x)\geq 0$, for $x\geq 0$. Furthermore, $W^{(q)}$ is determined by its Laplace transform in the following way
\begin{equation*}
\int_0^\infty e^{-\theta x} W^{(q)}(x)dx = \frac{ 1 }{ \Psi(\theta) - q}, \quad \theta > \Phi(q).
\end{equation*}
For a complete account on $q$-scale functions for spectrally negative L\'evy processes see \cite{Kuznetsov-Kyprianou-Rivero12}.

An important fact on spectrally L\'evy processes is that the resolvent density, $u_q$, can be written in terms of the $q$-scale function $W^{(q)}$ as follows 
\begin{equation*}
u_q(x) = \Phi'(q)e^{-\Phi(q) x} - W^{(q)}(-x), \quad q>0, x\in\mathbb{R}.
\end{equation*}
Furthermore, if $(X,\mathbb{P})$ has unbounded variation, $W^{(q)}(0)=0$, see Corollary 8.9 and Lemma 8.6 in \cite{Kyprianou06} for details. Now, Corollary 5 in Chapter VII in \cite{Bertoin} ensures that under {\bf H.1} and {\bf H.2}, $X$ has unbounded variation. †The latter facts imply, 
\begin{equation*}
h_q(x) = u_q(0)- u_q(-x) = \Phi'(q)(1-e^{\Phi(q) x}) + W^{(q)}(x), \quad q>0, x\in \mathbb{R}.
\end{equation*}
Thus, letting $q\to 0$, we obtain
\begin{equation*}
h(x) = \left\{
\begin{array}{ll}
\dfrac{1}{\Psi'(\Phi(0)+)} (1-e^{\Phi(0)x}) + W(x), & \text{if } \displaystyle{\lim_{t\to\infty }X_t = -\infty}, \\
\dfrac{-x}{\Psi''(0+)} + W(x), & \text{if } \displaystyle{\limsup_{t\to\infty }X_t = -\liminf_{t\to \infty} X_t = \infty}, \\
W(x), & \text{if } \displaystyle{\lim_{t\to\infty }X_t = \infty},
\end{array}
\right.
\end{equation*}
where $W=W^{(0)}$. So the function $h$ on this setting can be defined only with the hypotheses {\bf H.1} and {\bf H.2}. Observe that when $X$ oscillates and $\Psi''(0+) = \infty$ we have $h(x)=W(x)$ and hence $h(x) = 0$ for $x<0$.
\end{example}

\bigskip

\noindent{\bf Acknowledgements}. I would like to thank my advisors, Lo\"ic Chaumont and V\'ictor Rivero, for guiding me through the elaboration of this work. I would like to thank as well to the CNRS-CONACYT International Laboratory Solomon Lefschetz (LAISLA) by the support given during this research. Ce travail a b\'en\'efici\'e d'une aide de l'Agence Nationale de la Recherche portant la r\'ef\'erence ANR-09-BLAN-0084-01.

\bibliographystyle{abbrv}
\bibliography{ref2}

\end{document}